\newtheorem{Theorem}{Theorem}
\newtheorem{Lemma}[Theorem]{Lemma}
\newtheorem{Corollary}[Theorem]{Corollary}
\renewcommand\P{{\mathbb P}}
\newcommand\E{{\mathbb E}}
\newcommand\R{{\mathbb R}}
\newcommand\Z{{\mathbb Z}}
\begin{document}
\title{Slowly varying asymptotics\\ 
for signed stochastic difference equations}
\author{Dmitry Korshunov}
\maketitle

\begin{abstract}
For a stochastic difference equation $D_n=A_nD_{n-1}+B_n$ which 
stabilises upon time we study tail distribution asymptotics of $D_n$
under the assumption that the distribution of $\log(1+|A_1|+|B_1|)$
is heavy-tailed, that is, all its positive exponential moments are infinite.
The aim of the present paper is three-fold.
Firstly, we identify the asymptotic behaviour not only of the stationary
tail distribution but also of $D_n$.
Secondly, we solve the problem in the general setting
when $A$ takes both positive and negative values.
Thirdly, we get rid of auxiliary conditions like
finiteness of higher moments used in the literature before.

MSC: 60H25; 60J10
\end{abstract}

\section{Introduction}

Let $(A,B)$ be a random vector in $\R^2$ such that $\E\log|A|=-a<0$.
Let $(A_k,B_k)$, $k\in\Z$, be independent copies of $(A,B)$.
Consider the following stochastic difference equation
\begin{eqnarray}\label{eq:D.D}
D_n &=& A_nD_{n-1}+B_n\nonumber\\
&=& \Pi_1^n D_0+\sum_{k=1}^n \Pi_{k+1}^n B_k,\quad n\ge 1,
\end{eqnarray}
where $D_0$ is independent of $(A_k,B_k)$'s,
$\Pi_k^n:=A_k\cdot\ldots\cdot A_n$ for $k\le n$ and $\Pi_{n+1}^n=1$.
The process $D_n$ clearly constitutes a Markov chain and satisfies
the following equality in distribution
\begin{eqnarray*}
D_n &=_{st}& \Pi_{-n}^{-1}D_0 +\sum_{k=-n}^{-1} \Pi_{k+1}^{-1} B_k.
\end{eqnarray*}
If $a<\infty$ then, by the strong law of large numbers applied 
to the logarithm of $|\Pi|$, with probability $1$, 
$e^{-2an}\le \Pi_1^n\le e^{-an/2}$ ultimately in $n$, hence the process $D_n$, 
$n\ge 1$, is stochastically bounded if and only if $\E\log(1+|B|)<\infty$.
If $\P\{A=0\}>0$ which implies $a=\infty$,
then the process $D_n$ is always stochastically bounded.
In both cases, the Markov chain $D_n$ is stable, 
its stationary distribution is given by the following random series
\begin{eqnarray*}
D_\infty\ :=\ \sum_{k=-\infty}^{-1} \Pi_{k+1}^{-1} B_k &=_{st}&
\sum_{k=1}^\infty \Pi_1^{k-1} B_k
\end{eqnarray*}
and $D_n$ weakly converges to the stationary distribution as $n\to\infty$;
in the context of financial mathematics such random variables are
called stochastic perpetuities. 
Stability results for $D_n$ are dealt with in \cite{Vervaat}, see also \cite{BDM}; 
the case where $\E\log|A|$ is not necessarily finite is treated 
in \cite{GoldieMaller}.

Both perpetuities and stochastic difference equations have many
important applications, among them life insurance and finance,
nuclear technology, sociology, 
random walks and branching processes in random environments, 
extreme-value analysis, one-dimensional ARCH processes, etc. 
For particularities, we refer the reader to, for instance, 
Embrechts and Goldie \cite{EG}, Rachev and Samorodnitsky \cite{RS} 
and Vervaat \cite{Vervaat} for a comprehensive survey of the literature.

If $A\ge 0$ and $\P\{A>1\}>0$, then $\E A^\gamma\to\infty$ as $\gamma\to\infty$,
so $\E A^\beta\ge 1$ for some $\beta<\infty$. 
If in addition $B\ge 0$ and $\P\{B>0\}>0$,
then $\E D_\infty^\beta>\E D_\infty^\beta\E A^\beta\ge \E D_\infty^\beta$
which implies that $\E D_\infty^\beta=\infty$, in other words,
with necessity, not all moments of $D_\infty$ are finite;
see \cite{GG1996} for a similar conclusion for signed $A$ and $B$.
It was proven in the seminal paper by Kesten \cite[Theorem 5]{Kesten1973}, 
see also \cite{Goldie1991},
that if $\E |A|^\beta=1$ for some $\beta>0$, then
a power tail asymptotics for the stationary distribution holds,
$\P\{|D_\infty|>x\}\sim c/x^\beta$ as $x\to\infty$, for some $c>0$.

The problem we address in this paper is about the tail asymptotic behaviour 
of $D_n$ and of its stationary version $D_\infty$ in the case where 
the distribution of $\log|A|$ is heavy-tailed, that is,
all positive exponential moments of $\log|A|$ are infinite,
in other words, $\E|A|^\gamma=\infty$ for all $\gamma>0$.
It can only happen if the random variable $|A|$ has right unbounded support.

The only result in that direction we are aware of is that by Dyszewski 
\cite{Dyszewski} where in the context of iterated random functions 
it is proven that the stationary tail distribution is
asymptotically equivalent to
$$
\frac{1}{a}\int_x^\infty \P\{\log C>y\}dy\quad\mbox{as }x\to\infty,
$$
where $C:=\max(A,B)$, provided $A$, $B\ge 0$, the integrated tail distribution 
of $\log C$ is subexponential and under additional moment condition that
$\E\log^{1+\gamma} C<\infty$ for some $\gamma>0$. In the case of a signed $B$, 
only lower and upper asymptotic bounds are derived in \cite{Dyszewski}.
An alternative approach to lower and upper bounds for the tail of $D_\infty$
is developed in \cite{CRZ} in the case of positive $A$ and $B$.

The aim of the present paper is three-fold.
Firstly, we identify the asymptotic behaviour not only of the stationary
tail distribution but also of $D_n$ in the heavy-tailed case.
Secondly, we solve the problem in the general setting
when $A$ takes both positive and negative values.
Thirdly, we get rid of auxiliary conditions like
finiteness of higher moments.

Our approach to the problem is based on reduction of $D_n$ -- roughly
speaking by taking the logarithm of it -- to
an asymptotically homogeneous in space Markov chain
with heavy-tailed jumps and on further analysis of such chains.
Namely, we define a Markov chain $X_n$ on $\R$ as follows
\begin{eqnarray}\label{def.X.signed}
X_n &:=& \left\{
\begin{array}{rl}
\log(1+D_n) &\mbox{ if }D_n\ge 0,\\
-\log(1+|D_n|) &\mbox{ if }D_n< 0,
\end{array}
\right.
\end{eqnarray}
hence the distribution tail of $D_n$ may be computed as
\begin{eqnarray}\label{D.via.X}
\P\{D_n>x\} &=& \P\{X_n>\log(1+x)\}\quad\mbox{for }x>0.
\end{eqnarray}
At any state $x\ge 0$, the jump of the Markov chain $X_n$ is 
a random variable distributed as
\begin{eqnarray}\label{def.xi.signed+}
\xi(x) &=& 
\left\{
\begin{array}{rl}
\log(1+A(e^x{-}1)+B)-x & \mbox{ if }A(e^x{-}1)+B\ge 0,\\
-\log(1+|A(e^x{-}1)+B|)-x & \mbox{ if }A(e^x{-}1)+B<0,
\end{array}
\right.
\end{eqnarray}
and at any state $x\le 0$,
\begin{eqnarray}\label{def.xi.signed-}
\xi(x) &=& 
\left\{
\begin{array}{rl}
\log(1+A(1{-}e^{-x})+B)-x & \mbox{ if }A(1{-}e^{-x})+B\ge 0,\\
-\log(1+|A(1{-}e^{-x})+B|)-x & \mbox{ if }A(1{-}e^{-x})+B<0.
\end{array}
\right.
\end{eqnarray}
Also define a sequence of independent random fields $\xi_n(x)$, $x\in\R$,
which are independent copies of $\xi(x)$.
Then the recursion \eqref{eq:D.D} may be rewritten as
\begin{eqnarray*}
X_{n+1} &=& X_n+\xi_n(X_n).
\end{eqnarray*}
The Markov chain $X_n$ is {\it asymptotically homogeneous in space}, 
that is, the distribution of its jump $\xi(x)$ 
weakly converges to that of $\xi:=\log A$ as $x\to\infty$; 
it is particularly emphasised in \cite[Section 2]{Goldie1991}.
Let us underline that, in general, $\log(A+(1-A+B)e^{-x})$ 
may not converge to $\xi$ as $x\to\infty$ in total variation norm.

Asymptotically homogeneous in space Markov chains are studied in detail 
in \cite{BK2002,K2002} from the point of view of their asymptotic tail behaviour
in subexponential case.
However, that results for general asymptotically homogeneous in space Markov 
chains are not directly applicable to stochastic difference equations
as it is formally assumed in \cite[Theorem 3]{BK2002}
that the distribution of a Markov chain $X_n$ converges to
the invariant distribution in total variation norm
which is not always true for stochastic difference equations.
Secondly, stochastic difference equations possess some specific properties 
that allow us to find tail asymptotics in a simpler way 
than it is done in \cite[Theorem 3]{BK2002}
or in \cite[Theorem 3.1]{Dyszewski}; 
we explore that below however our approach still follows some ideas 
of the proof for Markov chains in \cite{BK2002}.

Let us recall some relevant classes of distributions needed 
in the heavy-tailed case.

{\bf Definition 1.} A distribution $H$ with right unbounded support 
is called {\it long-tailed}, $H\in\mathcal L$, if, for each fixed $y$,
$\overline H(x+y)\sim\overline H(x)$ as $x\to\infty$;
hereinafter $\overline H(x)=H(x,\infty)$ is the tail of $H$.

A random variable $A>0$ has slowly varying at infinity distribution
if and only if the distribution of $\xi:=\log A$ is long-tailed.

{\bf Definition 2.} A distribution $H$ on $\R^+$
with unbounded support is called {\it subexponential}, $H\in\mathcal S$,
if $\overline{H*H}(x)\sim 2\overline H(x)$ as $x\to\infty$.
Equivalently,
$\P\{\zeta_1+\zeta_2>x\}\sim 2\P\{\zeta_1>x\}$,
where random variables $\zeta_1$ and $\zeta_2$
are independent with common distribution $H$.
A distribution $H$ of a random variable $\zeta$ on $\R$ 
with right-unbounded support is called {\it subexponential} 
if the distribution of $\zeta^+$ is so.

As well-known (see, e.g. \cite[Lemma 3.2]{FKZ}) the
subexponentiality of $H$ on $\R^+$ implies long-tailedness of $H$.
In~particular, if the distribution of a random variable 
$\zeta\ge 0$ is subexponential then $\zeta$ is heavy-tailed.

For a distribution $H$ with finite mean, we define the
{\it integrated tail distribution} $H_I$ generated by $H$ as follows:
$$
\overline H_I(x) :=
\min\Bigl(1,\int_x^\infty \overline H(y)dy\Bigr).
$$

{\bf Definition 3.} A distribution $H$ on $\R^+$
with unbounded support and finite mean is called {\it strong subexponential},
$H\in\mathcal S^*$, if 
\begin{eqnarray*}
\int_0^x\overline H(x-y)\overline H(y)dy &\sim& 2m\overline H(x)
\quad\mbox{as }x\to\infty,
\end{eqnarray*}
where $m$ is the mean value of $H$. It is known that if $H\in\mathcal S^*$ 
then both $H$ and $H_I$ are subexponential distributions,
see e.g. \cite[Theorem 3.27]{FKZ}.

In what follows we use the following notation for distributions: we denote  

(i) the distribution of $\log(1+|A|+|B|)$ by $H$;

(ii) the distribution of $\log(1+|A|)$ by $F$;

(iii) the distribution of $\log(1+|B|)$ by $G$;

(iv) the distribution of $\log(1+B^+)$ by $G^+$;

(v) the distribution of $\log(1+B^-)$ by $G^-$.

The paper is organised as follows. 
In Sections \ref{sec:subexp.1}, \ref{sec:subexp.2} and \ref{sec:subexp.3}
we assume that $\log |A|$ has finite negative mean and
successively investigate three different cases in the order of increasing difficulty:
(i) both $A$ and $B$ are positive, see Theorem \ref{thm:subexp}; 
(ii) $A$ is positive and $B$ is a signed random variable, 
see Theorem \ref{thm:subexp.gen.B};
(iii) both $A$ and $B$ are signed, see Theorem \ref{thm:subexp.gen}.
In the case (i) we also explain in Theorem \ref{th:psbj}
the most probable way by which large deviations of $D_n$ can occur
-- it is a version of the principle of a single big jump
playing the key role in the theory of subexponential distributions.
The aim of Section \ref{sec:subexp.1.5} is to explain what happens 
if the distribution of $A$ has an atom at zero;
in that case the tail asymptotics of $D_n$ is essentially different 
from what we observe if $A$ has no atom at zero.

\section{Positive stochastic difference equation}
\label{sec:subexp.1}

In this section we consider a positive $D_n$, 
so $A>0$, $B\ge 0$ -- we exclude the case where $A$ has
an atom at zero as then the tail asymptotics of $D_n$ 
are essentially different, see the next section. 
Then the Markov chain $X_n:=\log(1+D_n)$ is positive too.
As above, we denote $\xi:=\log A$ and the distribution of the
random variable $\log(1+A+B)$ by $H$.

\begin{Theorem}\label{thm:subexp}
Suppose that $A>0$, $B\ge 0$, $\E\xi=-a\in(-\infty,0)$ and $\E\log(1+B)<\infty$,
so that $D_n$ is positive recurrent.

If the integrated tail distribution $H_I$ is long-tailed, then
\begin{eqnarray}\label{thm.lower.1}
\P\{D_\infty>x\} &\ge& (a^{-1}+o(1))\overline{H_I}(\log x)\quad\mbox{as }x\to\infty.
\end{eqnarray}
If, in addition, the distribution $H$ is long-tailed itself, then
\begin{eqnarray}\label{thm.lower.2}
\P\{D_n>x\} &\ge& \frac{1{+}o(1)}{a}\int_{\log x}^{\log x+na}\overline H(y)dy
\ \mbox{as }x\to\infty\mbox{ uniformly for all }n\ge 1.\nonumber\\[-2mm]
\end{eqnarray}

If the integrated tail distribution $H_I$ is subexponential then
\begin{eqnarray}\label{thm.upper.1}
\P\{D_\infty>x\} &\sim& a^{-1}\overline H_I(\log x)
\quad\mbox{as }x\to\infty.
\end{eqnarray}
If moreover the distribution $H$ is strong subexponential then
\begin{eqnarray}\label{thm.upper.2}
\P\{D_n>x\} &\sim& \frac{1}{a}\int_{\log x}^{\log x+na}\overline H(y)dy
\quad\mbox{as }x\to\infty\mbox{ uniformly for all }n\ge 1.\nonumber\\[-2mm]
\end{eqnarray}
\end{Theorem}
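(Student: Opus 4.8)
The plan is to establish the four assertions by analysing the asymptotically homogeneous Markov chain $X_n=\log(1+D_n)$ whose jump distribution converges to that of $\xi=\log A$. The guiding principle throughout is the single big jump heuristic: a large value of $D_n$, equivalently of $X_n$, is typically produced by one atypically large increment of $\log(1+A+B)$ occurring at some time $k\le n$, after which the chain decays at linear rate $a$ per step (by the law of large numbers for $\log\Pi$) and so still exceeds $\log x$ provided the big jump landed in roughly the window $[\log x,\log x+na]$.

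\medskip
\textbf{Lower bounds.} For \eqref{thm.lower.1} I would fix a level $N$ and, using the representation $D_\infty=_{st}\sum_{k\ge1}\Pi_1^{k-1}B_k$, condition on the first index $k$ at which $\log(1+A_k+B_k)$ is large: on that event $D_\infty\gtrsim \Pi_1^{k-1}(A_k+B_k)$, and by the law of large numbers $\Pi_1^{k-1}\ge e^{-(a+\varepsilon)k}$ with high probability for all large $k$, so summing the contributions over $k$ and using long-tailedness of $H$ to shift by bounded amounts produces $\sum_k \overline H(\log x+(a+\varepsilon)k)\approx \tfrac1{a+\varepsilon}\overline{H_I}(\log x)$; letting $\varepsilon\downarrow0$ gives the claimed constant $a^{-1}$. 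The bound \eqref{thm.lower.2} is the finite-$n$ analogue: the same construction, but now only jumps at times $k\le n$ contribute, and the contribution of a big jump at time $k$ is visible in $D_n$ iff $\log(1+A_k+B_k)\gtrsim\log x+a(n-k)$, so the sum of tail probabilities is $\sum_{k=1}^n\overline H(\log x+a(n-k)+o(\log x))\approx \tfrac1a\int_{\log x}^{\log x+na}\overline H(y)\,dy$; uniformity in $n$ follows because all error terms are controlled uniformly. Here one must be careful that the events "big jump at time $k$" for different $k$ are essentially disjoint and that the $B$-part does not spoil positivity of the partial sums, which it does not since $B\ge0$.

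\medskip
\textbf{Upper bounds.} This is the harder half. The strategy is the standard subexponential induction on the chain $X_n$. Writing $X_{n+1}=X_n+\xi_n(X_n)$, I would split according to whether the last jump is large or not. Using asymptotic homogeneity, for $x$ large the jump $\xi(x)$ is stochastically close to $\log A$, whose distribution has mean $-a$; strong subexponentiality of $H$ (hence subexponentiality of $H_I$, by \cite[Theorem 3.27]{FKZ}) gives the key inequality that lets one absorb the lower-order contributions. Concretely, one proves by induction on $n$ a bound of the form $\P\{X_n>t\}\le (a^{-1}+o(1))\int_t^{t+na}\overline H(y)\,dy$ uniformly, the inductive step using $\P\{X_{n}>t\}\le \P\{X_{n-1}>t-\xi\text{-typical}\}+\P\{\text{big jump at step }n\}$ together with the renewal-type identity $\overline{H_I}(t)-\overline{H_I}(t-a)\approx a\overline H(t)$ and the fact that $\mathcal S^*$ is closed under the relevant convolution-type operations; the finiteness of $\E\log(1+B)$ ensures the drift is genuinely $-a$ and no moment conditions beyond this are needed. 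Passing $n\to\infty$ and invoking weak convergence $D_n\Rightarrow D_\infty$ together with the uniform bound yields \eqref{thm.upper.1} from \eqref{thm.upper.2}, while the matching lower bounds \eqref{thm.lower.1}--\eqref{thm.lower.2} already proven pin down the asymptotic equivalence.

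\medskip
\textbf{Main obstacle.} The principal difficulty is the upper bound \eqref{thm.upper.2} with uniformity in $n$: unlike in \cite[Theorem 3]{BK2002} we cannot invoke total-variation convergence to stationarity, so the induction must be run directly and the error terms from the non-homogeneous region $\{x\text{ small}\}$ and from the discrepancy between $\xi(x)$ and $\log A$ must be shown to contribute only $o\bigl(\int_{\log x}^{\log x+na}\overline H\bigr)$ \emph{uniformly} in $n$. Controlling this requires exploiting the specific multiplicative structure — in particular that below any fixed level the chain returns to a bounded set geometrically fast and that $\log(1+A(e^x-1)+B)-x\le \log(1+A+B e^{-x}) \le \log(1+A+B)$ pointwise, so $H$ dominates the jump distribution uniformly in $x\ge0$ on the positive half-line. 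This domination, which is special to the positive case and fails once $B$ is signed, is what makes the present theorem simpler than the general Markov-chain result and is the technical heart of the argument.
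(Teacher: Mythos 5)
Your sketch of the lower bounds \eqref{thm.lower.1} and \eqref{thm.lower.2} is in the right spirit and essentially matches the paper: decompose according to the time $k$ of a single big jump, use the strong law of large numbers to keep a linear drift bound after the jump, and recognise the resulting Riemann sum $\sum_k \overline H(\log x + c + (n-k)(a+\varepsilon))$ as an integral that converges to $a^{-1}\overline{H_I}(\log x)$. (One small point: for \eqref{thm.lower.1} the paper needs only long-tailedness of $H_I$ to absorb the bounded shift $c$, not long-tailedness of $H$; your phrasing suggests the latter.)

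The upper bound is where your proposal has a genuine gap. You propose to prove \eqref{thm.upper.2} by induction on $n$, with an inductive hypothesis of the form $\P\{X_n>t\}\le (a^{-1}+o(1))\int_t^{t+na}\overline H(y)\,dy$. The paper explicitly flags this as the approach that does \emph{not} give uniformity in $n$: the induction argument of Dyszewski yields the asymptotics only for fixed $n$, because the $o(1)$ error produced at each inductive step must be controlled relative to $\int_t^{t+na}\overline H$, yet it is naturally of order $\overline H(t)$, and the ratio $\overline H(t)\big/\int_t^{t+na}\overline H(y)\,dy$ is not bounded away from zero uniformly over $n\ge 1$ and $t$ large. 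You identify uniformity as the ``principal difficulty'' but do not offer a mechanism for closing the induction uniformly.

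The idea you are missing is precisely how the paper \emph{uses} the pointwise domination you observe at the end of your note. Rather than feeding $\log(1+A(e^x-1)+B)-x\le\log(1+A+B)$ into an induction, the paper upgrades it to an a.s.\ coupling: define $\zeta(x_0)=\log(A+e^{-x_0}(1+B))$ with $\E\zeta(x_0)\le -a+\varepsilon$; verify the two inequalities $x+\xi(x)\le y+\zeta(x_0)$ for $y\ge x\ge x_0$ and $x+\xi(x)\le x_0+\zeta(x_0)$ for $x\le x_0$; and conclude that $X_n\le x_0+Z_n$ pathwise, where $Z_n$ is a Lindley (reflected) random walk with i.i.d.\ increments $\zeta(x_0)$. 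The uniform-in-$n$ and stationary tail asymptotics for such reflected random walks with subexponential (respectively strong subexponential) increments are classical (Theorems 5.2 and 5.3 of Foss--Korshunov--Zachary), and transferring them through the coupling, then letting $\varepsilon\downarrow 0$, gives \eqref{thm.upper.1} and \eqref{thm.upper.2} directly, with no induction and with uniformity built into the cited results. This reduction to a random-walk majorant, not the induction you outline, is the technical heart of the upper bound.
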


The main contribution of Theorem \ref{thm:subexp} is \eqref{thm.upper.2}
that states uniform asymptotic behaviour for all $n\ge 1$. 
It is much stronger than a rather simple conclusion that 
\eqref{thm.upper.2} holds for a fixed $n$ 
earlier proven by Dyszewski in \cite[Theorem 3.3]{Dyszewski}
by induction argument that clearly does not work
if we wanted to describe tail asymptotics for the entire range of $n\ge 1$.

In \cite{Dyszewski}, a sufficient condition for the asymptotics
\eqref{thm.upper.1} is formulated in terms of the distribution of $\log\max(A,B)$ 
instead of $H$. Let us show that these two approaches are equivalent.
Indeed, for any two positive random variables $A$ and $B$, since
\begin{eqnarray*}
\max(\log(1+A),\log(1+B)) &\le& \log(1+A+B)\\ 
&<& \log 2+\max(\log(1+A),\log(1+B)),
\end{eqnarray*}
it follows that 

(i) the distribution $H$ is long-tailed/subexponential/strong subexponential
if and only if the distribution of $\max(\log(1+A),\log(1+B))$ 
is long-tailed/sub\-exponential/strong subexponential respectively;

(ii) the distribution $H_I$ is subexponential if and only if the integrated tail 
distribution of $\max(\log(1+A),\log(1+B))$ is so.

Denote the distribution of $\log(1+A)$ by $F$ and that of $\log(1+B)$ by $G$.
In the next result we discuss some sufficient conditions for 
subexponentiality and related properties of $H$.

\begin{Lemma}
Let $A$ and $B$ be any two positive random variables such that 
either of the following two conditions holds:

(i) the distribution $H$ of $\log(1+A+B)$ is long-tailed or

(ii) the random variables $A$ and $B$ are independent.

Then if the distribution $(F+G)/2$ is subexponential or strong subexponential, 
then the distribution $H$ is subexponential or strong subexponential respectively.

If the integrated tail distribution $(F_I+G_I)/2$ is subexponential, 
then $H_I$ is subexponential too.
\end{Lemma}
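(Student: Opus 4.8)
The plan is to reduce both implications to the distribution $F_M$ of $M:=\max\bigl(\log(1+A),\log(1+B)\bigr)$. By the sandwich $M\le\log(1+A+B)<M+\log 2$ displayed above and the equivalences recorded just before the lemma, $H$ and $H_I$ belong to $\mathcal L$, $\mathcal S$, $\mathcal S^*$, resp.\ to $\mathcal S$, if and only if $F_M$, resp.\ $(F_M)_I$, do; so it is enough to work with $F_M$. Put $K:=(F+G)/2$. For \emph{any} joint law of $(A,B)$, Bonferroni's inequality gives $\max(\overline F(x),\overline G(x))\le\overline{F_M}(x)\le\overline F(x)+\overline G(x)$, that is,
\[ \overline K(x)\ \le\ \overline{F_M}(x)\ \le\ 2\overline K(x)\qquad\text{for every }x, \]
and, on integrating, $\overline{K_I}(x)\le\overline{(F_M)_I}(x)\le 2\overline{K_I}(x)$, where $\overline{K_I}(x)=\tfrac12\bigl(\overline{F_I}(x)+\overline{G_I}(x)\bigr)$ coincides with $\overline{(F_I+G_I)/2}(x)$ for all large $x$ (the truncation at $1$ being immaterial). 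Thus it suffices to prove: (a)~$K\in\mathcal S\Rightarrow F_M\in\mathcal S$; (b)~$K\in\mathcal S^*\Rightarrow F_M\in\mathcal S^*$; (c)~$K_I\in\mathcal S\Rightarrow (F_M)_I\in\mathcal S$.

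The engine for (a) and (c) is the comparison statement: \emph{if $W\in\mathcal S$, $V\in\mathcal L$ is a distribution on $\R^+$, and $c_1\overline W(x)\le\overline V(x)\le c_2\overline W(x)$ for all large $x$ with some $c_1>0$, then $V\in\mathcal S$.} I would prove this by the usual splitting: $\liminf_{x\to\infty}\overline{V*V}(x)/\overline V(x)\ge2$ holds for every $V\in\mathcal L$, while, writing $\overline{V*V}(x)=\overline V(x)+\int_{[0,x]}\overline V(x-y)\,V(dy)$ and cutting the integral at $y\le N$, $N<y\le x-N$, $y>x-N$, the two outer pieces sum to $(1+o(1))\overline V(x)$ by long-tailedness, and in the middle piece one bounds $\overline V(x-y)\le c_2\overline W(x-y)$, interchanges the order of integration, bounds the resulting tail of $V$ once more by $c_2\overline W$, and is left with $c_2^2\int_{N<s\le x-N}\overline W(x-s)\,W(ds)$ plus a term $O\bigl(\overline V(N)\overline W(x)\bigr)$; since $W\in\mathcal S$ the convolution integral over $(N,x-N]$ is $o(\overline W(x))$ with its $\limsup$ over $x$ tending to $0$ as $N\to\infty$, so the middle piece is $o(\overline V(x))$ because $\overline V\ge c_1\overline W$. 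Hence $\limsup\overline{V*V}(x)/\overline V(x)\le2$ and $V\in\mathcal S$. Applying this with $(V,W)=(F_M,K)$ gives (a), and with $(V,W)=\bigl((F_M)_I,K_I\bigr)$ gives (c) -- once the relevant $V$ is known to be long-tailed.

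For (c) long-tailedness is automatic: $\overline{(F_M)_I}(x)-\overline{(F_M)_I}(x{+}y)=\int_x^{x+y}\overline{F_M}\le2\int_x^{x+y}\overline K=2\bigl(\overline{K_I}(x)-\overline{K_I}(x{+}y)\bigr)=o\bigl(\overline{K_I}(x)\bigr)$, which is $o\bigl(\overline{(F_M)_I}(x)\bigr)$ by $\overline{(F_M)_I}\ge\overline{K_I}$, using $K_I\in\mathcal L$. For (a) and (b), $F_M\in\mathcal L$ is precisely hypothesis~(i) (via the quoted equivalence); and if instead $A,B$ are independent (hypothesis~(ii)) then, with $X=\log(1+A)$, $Y=\log(1+B)$ independent, $\overline{F_M}(x)-\overline{F_M}(x{+}y)=\bigl(\overline F(x)-\overline F(x{+}y)\bigr)+\bigl(\overline G(x)-\overline G(x{+}y)\bigr)-\bigl(\overline F(x)\overline G(x)-\overline F(x{+}y)\overline G(x{+}y)\bigr)\le2\bigl(\overline K(x)-\overline K(x{+}y)\bigr)=o(\overline K(x))=o(\overline{F_M}(x))$, using $K\in\mathcal L$; so $F_M\in\mathcal L$ in both cases. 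Finally (b): $K\in\mathcal S^*$ forces a finite mean, so $\overline{F_M}\le2\overline K$ is integrable and $F_M$ has a finite mean $m$; in $\int_0^x\overline{F_M}(x-y)\overline{F_M}(y)\,dy$ the parts $y\le N$ and $y>x-N$ contribute $(2m+o(1))\overline{F_M}(x)$ by long-tailedness, while the middle part is $\le4\int_N^{x-N}\overline K(x-y)\overline K(y)\,dy=o(\overline K(x))=o(\overline{F_M}(x))$ because $K\in\mathcal S^*$; hence $\int_0^x\overline{F_M}(x-y)\overline{F_M}(y)\,dy\sim2m\,\overline{F_M}(x)$, i.e.\ $F_M\in\mathcal S^*$.

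The main obstacle is case~(i) of part~(a): without independence the only information about the joint tail of $A$ and $B$ is that $F_M\in\mathcal L$, so everything rests on the comparison statement -- concretely, on showing that the ``middle'' of the self-convolution of $F_M$ is negligible by transporting it, through the interchange of integration order and the two-sided bound $\overline K\le\overline{F_M}\le2\overline K$, to the ``middle'' of the self-convolution of the subexponential distribution $K$. Parts~(b) and~(c) are then comparatively routine, because the strong-subexponentiality integral and the integrated tail are already of ``tail against tail'', resp.\ ``tail against measure'', form, to which the pointwise domination $\overline{F_M}\le2\overline K$ applies directly.
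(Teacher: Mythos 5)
Your argument is correct and follows essentially the same strategy as the paper: sandwich the relevant tail between $\overline K$ and $2\,\overline K$ with $K=(F+G)/2$ (you route through $F_M=\max(\log(1+A),\log(1+B))$, the paper works with $H$ directly, but the prefatory $\log 2$-sandwich makes these interchangeable), then transfer the $\mathcal S$/$\mathcal S^*$/$\mathcal S$-of-integrated-tail property from $K$ to the sandwiched distribution using long-tailedness. The one genuine difference is in how that transfer is justified. The paper simply cites Theorems 3.11 and 3.25 of Foss--Korshunov--Zachary (tail comparability up to constants plus long-tailedness preserves $\mathcal S$ and $\mathcal S^*$), whereas you re-derive the comparison principle from scratch via the three-way split of the self-convolution at $N$ and $x-N$ and the interchange-of-integration device, which makes your write-up self-contained but longer. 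A second, minor difference: in case~(ii) the paper sharpens the lower bound to $\overline H(x)\ge\overline F(x)+F(x)\overline G(x)\sim\overline F(x)+\overline G(x)$, obtaining actual tail \emph{equivalence} $\overline H\sim 2\overline K$, so the transfer is immediate; you instead use independence only to establish long-tailedness of $F_M$ and then feed it back into the same two-sided comparison engine. Both routes are valid; the paper's is shorter by citation, yours is more elementary and explicit.
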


\begin{proof}
First assume that (i) holds. On one side,
\begin{eqnarray}\label{1AB.lower}
\overline H(x) &=& \P\{\log(1+A+B)>x\}\nonumber\\ 
&\ge& \frac{\P\{\log(1+A)>x\}+\P\{\log(1+B)>x\}}{2}\nonumber\\
&=& \bigl(\overline F(x)+\overline G(x)\bigr)/2
\end{eqnarray}
and thus, for all sufficiently large $x$,
\begin{eqnarray}\label{1AB.I.lower}
\overline{H_I}(x) &\ge& \bigl(\overline{F_I}(x)+\overline{G_I}(x)\bigr)/2.
\end{eqnarray}
On the other side,
\begin{eqnarray}\label{H.above.gen}
\overline H(x) &\le& \P\{\log(1+2A)>x\}+\P\{\log(1+2B)>x\}\nonumber\\
&<& \overline F(x-\log 2)+\overline G(x-\log 2).
\end{eqnarray}
If $(F+G)/2$ is subexponential then it is long-tailed and hence
\begin{eqnarray}\label{1AB.upper}
\overline H(x) &\le& 
(1+o(1))\bigl(\overline F(x)+\overline G(x)\bigr)\quad\mbox{as }x\to\infty.
\end{eqnarray}
If $(F_I+G_I)/2$ is subexponential then similarly
\begin{eqnarray}\label{1AB.I.upper}
\overline{H_I}(x) &\le& 
(1+o(1))\bigl(\overline{F_I}(x)+\overline{G_I}(x)\bigr)\quad\mbox{as }x\to\infty.
\end{eqnarray}
The two bounds \eqref{1AB.upper} and \eqref{1AB.lower} 
in the case of long-tailed $H$ allow us to apply Theorem 3.11 or 3.25 
from \cite{FKZ} and to conclude subexponentiality
or strong subexponentiality of $H$ respectively provided $(F+G)/2$ is so.

The two bounds \eqref{1AB.I.upper} and \eqref{1AB.I.lower} 
in the case of long-tailed $H_I$ allow us to apply 
Theorem 3.11 from \cite{FKZ} and to conclude
subexponentiality of $H_I$ provided $(F_I+G_I)/2$ is so.

Now let us consider the case where $A$ and $B$ are independent
which yields the following improvement on the lower bound 
\eqref{1AB.lower}. For all $x>0$, 
\begin{eqnarray*}
\overline H(x) &\ge& \P\{\log(1+A)>x\}+\P\{\log(1+A)\le x\}\P\{\log(1+B)>x\}\nonumber\\
&=& \overline F(x)+F(x)\overline G(x)\\
&\sim& \overline F(x)+\overline G(x)\quad\mbox{as }x\to\infty.
\end{eqnarray*}
Therefore, $H$ inherits the tail properties of the distribution $(F+G)/2$,
and $H_I$ the tail properties of $(F_I+G_I)/2$.
\end{proof}

\begin{proof}[Proof of Theorem \ref{thm:subexp}]
At any state $x\ge 0$, the Markov chain $X_n$ has jump
\begin{eqnarray*}
\xi(x) &=& \log(1+A(e^x-1)+B)-x\\
&=& \log(A+e^{-x}(1-A+B))\\
&\ge& \log(A-e^{-x}A),
\end{eqnarray*}
as $B\ge 0$. Fix an $\varepsilon>0$. Choose $x_0$ sufficiently large 
such that $\log(1-e^{-x_0})\ge -\varepsilon/2$.
Then the family of jumps $\xi(x)$, $x\ge x_0$, possesses an integrable minorant
\begin{eqnarray}\label{xi.x.min}
\xi(x) &\ge& \xi+\log(1-e^{-x_0})\nonumber\\ 
&\ge& \xi-\varepsilon/2\ =:\ \eta.
\end{eqnarray}
On the other hand, since $A>0$ and $B\ge 0$, 
the family of jumps $\xi(x)$, $x\ge x_0$, 
possesses an integrable majorant $\zeta(x_0):=\log(A+e^{-x_0}(1+B))$.
For a sufficiently large $x_0$,
\begin{eqnarray}\label{xi.x.maj}
\E\log(A+e^{-x_0}(1+B)) &\le& \E\xi+\varepsilon,
\end{eqnarray}
owing to the dominated convergence theorem which applies because
firstly $\log(A+e^{-x_0}(1+B))\to \log A=\xi$ a.s. as $x_0\to\infty$
and secondly, by the concavity of the function $\log(1+z)$,
\begin{eqnarray*}
\log(A+e^{-x_0}(1+B)) &<& \log(1+A+e^{-x_0}(1+B))\\
&\le& \log(1+A)+\log(1+e^{-x_0}(1+B)),
\end{eqnarray*}
which is integrable by the finiteness of $\E\xi$ and $\E\log(1+B)$.

Let us first prove the lower bound \eqref{thm.lower.1}
following the single big jump technique known from the theory of
subexponential distributions. Since $D_n$ is assumed to be convergent,
the associated Markov chain $X_n$ is stable, so there exists a $c>2$ such that
$$
\P\{X_n\in(1/c,c]\}\ \ge\ 1-\varepsilon \quad\mbox{for all }n\ge 0.
$$
Let us consider the event
\begin{eqnarray}\label{def.Bknc}
\Omega(k,n,c) &:=& \{\eta_{k+1}+\ldots+\eta_{k+j}
\ge -c-n(a+\varepsilon)\mbox{ for all }j\le n\},
\end{eqnarray}
where $\eta_k$ are independent copies of $\eta$ defined in \eqref{xi.x.min}. 
By the strong law of large numbers,
there exists a sufficiently large $c$ such that
\begin{eqnarray}\label{B.c}
\P\{\Omega(k,n,c)\} &\ge& 1-\varepsilon\quad\mbox{for all }k\mbox{ and }n.
\end{eqnarray}
It follows from \eqref{xi.x.min} that any of the events
\begin{eqnarray}\label{events.lower.bound}
\{X_{k-1}\le c,\ 
X_k>x+c+(n-k)(a+\varepsilon),\ \Omega(k,n-k,c)\}
\end{eqnarray}
implies $X_n>x$ and they are pairwise disjoint.
Therefore, by the Markov property and \eqref{B.c}, 
\begin{eqnarray*}
\lefteqn{\P\{X_n>x\}}\\ 
&\ge& \sum_{k=1}^n \P\{X_{k-1}\le c,\ 
X_k>x+c+(n-k)(a+\varepsilon)\}\P\{\Omega(k,n-k,c)\}\\
&\ge& (1-\varepsilon)\sum_{k=1}^n
\P\{X_{k-1}\in(1/c,c],\ X_k>x+c+(n-k)(a+\varepsilon)\}.
\end{eqnarray*}
The $k$th probability on the right hand side equals
\begin{eqnarray*}
\lefteqn{\int_{1/c}^c\P\{X_{k-1}\in dy\}\P\{y+\xi(y)>x+c+(n-k)(a+\varepsilon)\}}\\
&=&  \int_{1/c}^c
\P\{X_{k-1}\in dy\}\P\{\log(1+A(e^y-1)+B)>x+c+(n-k)(a+\varepsilon)\}.
\end{eqnarray*}
For all $y>1/c$, 
\begin{eqnarray*}
\log(1+A(e^y-1)+B) &\ge& \log(1+A(e^{1/c}-1)+B)\\
&\ge& \log(1+A+B)+\log(e^{1/c}-1),
\end{eqnarray*}
because $e^{1/c}-1<\sqrt e-1<1$. Therefore, 
the value of the last integral is not less than
\begin{eqnarray*}
\P\{X_{k-1}\in(1/c,c]\} \P\{\log(1+A+B)>x+c_1+(n-k)(a+\varepsilon)\},
\end{eqnarray*}
where $c_1:=c-\log(e^{1/c}-1)$. Hence, due to the choice of $c$,
\begin{eqnarray*}
\P\{X_n>x\} &\ge& (1-\varepsilon)^2
\sum_{k=1}^n \overline H(x+c_1+(n-k)(a+\varepsilon)).
\end{eqnarray*}
Since the tail is a decreasing function, the last sum is not less than
\begin{eqnarray}\label{integral.0.n}
\frac{1}{a+\varepsilon}
\int_0^{n(a+\varepsilon)} \overline H(x+c_1+y)dy.
\end{eqnarray}
Letting $n\to\infty$ we obtain that the tail at point $x$ 
of the stationary distribution of the Markov chain $X$ is not less than
\begin{eqnarray*}
\frac{(1-\varepsilon)^2}{a+\varepsilon}
\int_0^\infty \overline H(x+c_1+y)dy
&=& \frac{(1-\varepsilon)^2}{a+\varepsilon} \overline{H_I}(x+c_1)\\
&\sim& \frac{(1-\varepsilon)^2}{a+\varepsilon} \overline{H_I}(x)
\quad\mbox{as }x\to\infty,
\end{eqnarray*}
due to the long-tailedness of the integrated tail distribution $H_I$.
Summarising altogether we deduce that, for every fixed $\varepsilon>0$,
$$
\liminf_{x\to\infty}
\frac{\P\{D_\infty>x\}}{\overline{H_I}(\log x)}
\ge \frac{(1-\varepsilon)^2}{a+\varepsilon},
$$
which implies the lower bound \eqref{thm.lower.1} 
due to the arbitrary choice of $\varepsilon>0$.

If the distribution $H$ is long-tailed itself, then the integral in 
\eqref{integral.0.n} is asymptotically equivalent to the integral
$$
\int_x^{x+n(a+\varepsilon)} \overline H(y)dy
\quad\mbox{as }x\to\infty\mbox{ uniformly for all }n\ge 1,
$$
which implies the second lower bound \eqref{thm.lower.2}.

Now let us turn to the asymptotic upper bound under the assumption that
the integrated tail distribution $H_I$ is subexponential.
Fix an $\varepsilon\in(0,a)$. Let $x_0$ be defined as in \eqref{xi.x.maj},
so $\E\zeta(x_0)\le -a+\varepsilon$. 
Let $J$ be the distribution of $\zeta(x_0)$. Since
$$
\log(1+A+B)-x_0\ \le\ \zeta(x_0)\ \le\ \log(1+A+B),
$$
we have $\overline H(x+x_0)\le\overline J(x)\le\overline H(x)$.
Then subexponentiality of $H_I$ yields subexponentiality 
of the integrated tail distribution $J_I$ and 
$\overline J_I(x)\sim\overline H_I(x)$ as $x\to\infty$.

By the construction of $\zeta(x_0)$,
\begin{eqnarray}\label{xi.eta.1}
x+\xi(x) &\le& y+\zeta(x_0)\quad\mbox{for all }y\ge x\ge x_0.
\end{eqnarray}
Also, by the positivity of $A$,
\begin{eqnarray}\label{xi.eta.2}
x+\xi(x) &=& \log(1+A(e^x-1)+B)\nonumber\\
&\le& \log(1+A(e^{x_0}-1)+B)\nonumber\\
&=& x_0+\xi(x_0)\ \le\ x_0+\zeta(x_0)\quad\mbox{for all }x\le x_0.
\end{eqnarray}
Consider a random walk $Z_n$ delayed at the origin with jumps $\zeta(x_0)$:
$$
Z_0:=0,\ \ Z_n:=(Z_{n-1}+\zeta_n(x_0))^+,
$$
where $\zeta_n(x_0)$ are independent copies of $\zeta(x_0)$.
The upper bounds \eqref{xi.eta.1} and \eqref{xi.eta.2} yield that
the two chains $X_n$ and $Z_n$ can be constructed on a common probability space
in such a way that, with probability $1$,
\begin{eqnarray}\label{Xn.Zn.dom}
X_n &\le& x_0+Z_n\quad\mbox{for all }n,
\end{eqnarray}
so $X_n$ is dominated by a random walk on $[x_0,\infty)$ delayed at point $x_0$.
Since the integrated tail distribution $J_I$ is subexponential, 
the tail of the invariant measure of the chain $Z_n$ is asymptotically equivalent 
to $\overline J_I(x)/(a-\varepsilon)\sim\overline H_I(x)/(a-\varepsilon)$ 
as $x\to\infty$, see, for example, \cite[Theorem 5.2]{FKZ}. 
Thus, the tail of the invariant measure of $X_n$ is asymptotically 
not greater than $\overline H_I(x-x_0)/(a-\varepsilon)$
which is equivalent to $\overline H_I(x)/(a-\varepsilon)$,
since $H_I$ is long-tailed by subexponentiality. Hence,
$$
\limsup_{x\to\infty} \frac{\P\{D_\infty>x\}}{\overline H_I(\log x)}
\le \frac{1}{a-\varepsilon}.
$$
Due to arbitrary choice of $\varepsilon>0$
and the lower bound proven above this completes the proof
of the first asymptotics \eqref{thm.upper.1}.

The same arguments with the same majorant \eqref{Xn.Zn.dom} allow us to conclude 
the finite time horizon asymptotics for $D_\infty$
if we apply Theorem 5.3 from \cite{FKZ} instead of Theorem 5.2.
\end{proof}

Theorem \ref{thm:subexp} makes it possible to identify a moment of time 
after which the tail distribution of $D_n$ is equivalent to that of $D_\infty$,
in some particular strong subexponential cases.

\begin{Corollary}\label{cor:subexp.equiv}
Suppose that $\E\log A=-a<0$, $B>0$ and $\E\log(1+B)<\infty$.

If the distribution $H$ of $\log(1+A+B)$
is regularly varying at infinity with index $\alpha<-1$, 
then $\P\{D_n>x\} \sim \P\{D_\infty>x\}$ as $n$, $x\to\infty$
if and only if $n/\log x\to\infty$.

If $\overline H(x)\sim e^{-x^\beta}$ for some $\beta\in(0,1)$, 
then $\P\{D_n>x\} \sim \P\{D_\infty>x\}$ as $n$, $x\to\infty$
if and only if $n/\log^{1-\beta} x\to\infty$.
\end{Corollary}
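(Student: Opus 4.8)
The plan is to combine the two uniform-in-$n$ asymptotics of Theorem~\ref{thm:subexp} with an elementary analysis of the ratio $\P\{D_n>x\}/\P\{D_\infty>x\}$. First I would observe that in both cases the hypotheses of Theorem~\ref{thm:subexp} are fulfilled: a distribution that is regularly varying at infinity with index $\alpha<-1$, and a Weibull-type distribution with $\overline H(x)\sim e^{-x^\beta}$ for $\beta\in(0,1)$, each has unbounded support and finite mean and belongs to $\mathcal S^*$ (these are classical, see e.g.~\cite{FKZ}); in particular each is long-tailed. Hence \eqref{thm.upper.1} and \eqref{thm.upper.2} apply, and since $\int_{\log x}^{\log x+na}\overline H(y)\,dy=\overline H_I(\log x)-\overline H_I(\log x+na)$ for all large $x$,
\begin{eqnarray*}
\frac{\P\{D_n>x\}}{\P\{D_\infty>x\}} &=& (1+o(1))\,\frac{\int_{\log x}^{\log x+na}\overline H(y)\,dy}{\overline H_I(\log x)}\ =\ (1+o(1))\Bigl(1-\rho(n,x)\Bigr),
\end{eqnarray*}
where $\rho(n,x):=\overline H_I(\log x+na)/\overline H_I(\log x)\in[0,1)$ and $o(1)\to0$ as $x\to\infty$ uniformly in $n\ge1$. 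Therefore $\P\{D_n>x\}\sim\P\{D_\infty>x\}$ as $n,x\to\infty$ if and only if $\rho(n,x)\to0$, and it remains to translate this last condition into the claimed growth conditions on $n$ relative to $\log x$.

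In the regularly varying case, Karamata's theorem shows that $\overline H_I$ is regularly varying with index $\alpha+1<0$. Writing $t=\log x$ and $s=na$, I would consider subsequences according to whether $s/t$ stays bounded or tends to infinity: along a subsequence with $s/t\to c\in[0,\infty)$ the uniform convergence theorem gives $\rho(n,x)\to(1+c)^{\alpha+1}\in(0,1]$, so $\rho\not\to0$; along a subsequence with $s/t\to\infty$, Potter's bounds give $\rho(n,x)\le C\,(1+s/t)^{\alpha+1+\varepsilon}\to0$ once $\varepsilon>0$ is small enough that $\alpha+1+\varepsilon<0$. Hence $\rho(n,x)\to0$ if and only if $na/\log x\to\infty$, equivalently $n/\log x\to\infty$, which is the first assertion.

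In the Weibull case one first notes that $\overline H(y)\sim e^{-y^\beta}$ forces $\overline H_I(x)=\int_x^\infty\overline H(y)\,dy\sim\int_x^\infty e^{-y^\beta}\,dy\sim\beta^{-1}x^{1-\beta}e^{-x^\beta}$ (the first equivalence because the error factor is uniformly $o(1)$ under the integral, the second by integration by parts). Consequently, with $t=\log x$ and $s=na$,
\begin{eqnarray*}
\rho(n,x) &\sim& \Bigl(1+\frac st\Bigr)^{1-\beta}e^{t^\beta-(t+s)^\beta},
\end{eqnarray*}
so everything hinges on $(t+s)^\beta-t^\beta$. From the two-sided bound $\beta s(t+s)^{\beta-1}\le(t+s)^\beta-t^\beta\le\beta s\,t^{\beta-1}$ one reads off that $(t+s)^\beta-t^\beta$ is bounded exactly when $s/t^{1-\beta}$ is bounded; when $s/t^{1-\beta}\to\infty$ the exponential factor decays fast enough to absorb the at-most-polynomial prefactor $(1+s/t)^{1-\beta}$ (splitting into the regimes $s\le t$ and $s>t$), while when $s/t^{1-\beta}$ stays bounded the prefactor tends to $1$ and $\rho$ stays away from $0$. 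Hence $\rho(n,x)\to0$ if and only if $na/(\log x)^{1-\beta}\to\infty$, equivalently $n/(\log x)^{1-\beta}\to\infty$.

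I expect the only genuine work to be in the Weibull case, where the competition between the polynomial prefactor and the stretched-exponential factor in $\rho$ must be controlled uniformly across the whole range of $n$ — from $n$ of smaller order than $(\log x)^{1-\beta}$, through $n$ comparable to $\log x$, up to $n$ much larger — and the elementary two-sided estimate for $(t+s)^\beta-t^\beta$ is exactly what makes that transparent. Checking $\mathcal S^*$-membership and inserting the uniform-in-$n$ asymptotics of Theorem~\ref{thm:subexp} is routine.
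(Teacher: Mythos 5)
The paper states this corollary without proof, as an immediate consequence of the uniform asymptotics \eqref{thm.upper.1} and \eqref{thm.upper.2} of Theorem~\ref{thm:subexp}. Your proposal supplies exactly the argument the author must have had in mind: reduce the question to $\rho(n,x)=\overline H_I(\log x+na)/\overline H_I(\log x)\to 0$, then analyse $\rho$ via Karamata/Potter in the regularly varying case and via the explicit asymptotic $\overline H_I(x)\sim\beta^{-1}x^{1-\beta}e^{-x^\beta}$ together with the two-sided bound $\beta s(t+s)^{\beta-1}\le(t+s)^\beta-t^\beta\le\beta s t^{\beta-1}$ in the Weibull case. The verification that each $H$ lies in $\mathcal S^*$ with finite mean, the use of the uniformity in $n$ from Theorem~\ref{thm:subexp}, the subsequence dichotomy establishing the ``only if'' directions, and the splitting into the regimes $s\le t$ and $s>t$ to control the polynomial prefactor against the stretched-exponential decay in the ``if'' direction for the Weibull case are all correct. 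This is a complete and correct proof.
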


We conclude this section by a version of the principle of a single big
jump for $D_n$. For any $c>1$ and $\varepsilon>0$ consider events
\begin{eqnarray*}
\Omega_k &:=& \{1/c<X_{k-1}\le c,\ X_k>\log x+c+(n-k)(a+\varepsilon),\\
&&\hspace{35mm} |X_{k+j}-X_k+aj|\le c+j\varepsilon\mbox{ for all }j\le n-k\bigr\}
\end{eqnarray*}
or, in terms of $D_n$,
\begin{eqnarray*}
\Omega^D_k &:=& \{1/c<D_{k-1}\le c,\ 
A_k/c+B_k>xe^{c+(n-k)(a+\varepsilon)},\ \\
&&\hspace{20mm} e^{-c-j(a+\varepsilon)}\le D_{k+j}/D_k\le e^{c-j(a-\varepsilon)}
\mbox{ for all }j\le n-k\bigr\}.
\end{eqnarray*}
Roughly speaking, it describes a trajectory such that, for large $x$,
the $D_{k-1}$ is neither too far away from zero nor too close,
then a single big jump occurs, both $A_k$ and $B_k$ may contribute to that
big jump, and then the logarithm of $D_{k+j}$, $j\le n-k$, 
moves down according to the strong law of large numbers with drift $-a$.
As stated in the next theorem, the union of all these events
describes more precisely than the lower bound of Theorem \ref{thm:subexp} 
the most probable way by which large deviations of $D_n$ do occur.

\begin{Theorem}\label{th:psbj}
Let the distribution $H$ of $\log(1+A+B)$ be strong subexponential.
Then, for any fixed $\varepsilon>0$,
\begin{eqnarray*}
\lim_{c\to\infty}\lim_{x\to\infty}
\inf_{n\ge 1}\P\{\cup_{k=0}^{n-1} \Omega_k\mid D_n>x\} &=& 1.
\end{eqnarray*}
\end{Theorem}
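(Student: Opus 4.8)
The plan is to establish the upper bound for $\P\{D_n>x\}$ restricted to the complement $\overline{\cup_k\Omega_k}$ and show it is asymptotically negligible compared with the two-sided estimate $\frac1a\int_{\log x}^{\log x+na}\overline H(y)\,dy$ provided by Theorem~\ref{thm:subexp}. Since that theorem already gives $\P\{D_n>x\}\sim\frac1a\int_{\log x}^{\log x+na}\overline H(y)\,dy$ uniformly in $n$, it suffices to prove that, for every $\varepsilon>0$,
\begin{eqnarray*}
\lim_{c\to\infty}\limsup_{x\to\infty}\sup_{n\ge1}
\frac{\P\{X_n>\log x,\ \cap_{k=0}^{n-1}\overline{\Omega_k}\}}
{\frac1a\int_{\log x}^{\log x+na}\overline H(y)\,dy}=0.
\end{eqnarray*}
Working throughout with $X_n$ via \eqref{D.via.X} and writing $u=\log x$, I would decompose the trajectory $\{X_n>u\}$ according to the last time $k$ before $n$ at which $X_{k-1}$ is in the ``starting window'' $(1/c,c]$ and the chain then makes its decisive jump above level $u+c+(n-k)(a+\varepsilon)$; the stability of $X_n$ (the same $c$ with $\P\{X_m\in(1/c,c]\}\ge1-\varepsilon$ used in the proof of Theorem~\ref{thm:subexp}) guarantees such a $k$ exists with high probability, so the event $\{X_n>u\}\cap\overline{\cup_k\Omega_k}$ is, up to an error $O(\varepsilon)\cdot\P\{X_n>u\}$, covered by the union over $k$ of events where \emph{either} the jump at step $k$ is not ``big'' (i.e. $X_k\le u+c+(n-k)(a+\varepsilon)$ but the chain still climbs above $u$ later) \emph{or} the post-jump trajectory violates the SLLN corridor $|X_{k+j}-X_k+aj|\le c+j\varepsilon$ yet still ends above $u$.

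For the first of these — a large deviation built from several moderate jumps rather than one big jump — I would invoke the strong-subexponentiality hypothesis on $H$ exactly as it enters the classical ``principle of a single big jump'' for random walks: using the majorant $\zeta(x_0)=\log(A+e^{-x_0}(1+B))$ and the random-walk domination $X_n\le x_0+Z_n$ from \eqref{Xn.Zn.dom}, the contribution of trajectories that reach level $u$ without any single increment exceeding, say, $\delta u$, is $o\big(\overline H_I(u)\big)$ and hence $o\big(\int_u^{u+na}\overline H\big)$ uniformly in $n$; this is where one quotes the finite-horizon random-walk asymptotics (Theorem~5.3 of \cite{FKZ}) together with the standard fact that for $H\in\mathcal S^*$ the ``no big jump'' probability is negligible. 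For the second — one big jump at step $k$ but then the trajectory fails to descend along the $-a$ corridor — I would use the integrable minorant $\eta=\xi-\varepsilon/2$ from \eqref{xi.x.min} and the integrable majorant $\zeta(x_0)$ from \eqref{xi.x.maj} to sandwich the post-jump increments between two i.i.d.\ sums with drifts $-a\pm\varepsilon$, so that the event of leaving the corridor before time $n-k$, intersected with still being above $u$ at time $n$, has probability at most $\overline H\big(u+c+(n-k)(a+\varepsilon)\big)$ times a factor that is summable in the deviation size by Kolmogorov/Etemadi maximal-inequality bounds for the centred walk; summing over $k$ reproduces an integral of $\overline H$ of the same order as the denominator but multiplied by a quantity $\to0$ as $c\to\infty$.

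The main obstacle is the \emph{uniformity in $n$} of the corridor estimate: for the post-jump segment I need $\P\{\exists\, j\le n-k:\ |S_j+aj|>c+j\varepsilon\}\le\rho(c)$ with $\rho(c)\to0$ independently of $n-k$, where $S_j$ is the centred sum of the $\eta$'s (and of the $\zeta$'s). This is a one-sided SLLN-with-linear-window statement, and it holds because $\sum_j\P\{|S_j|>c+j\varepsilon\}$ converges (each term decays like the tail of a sum of $j$ i.i.d.\ mean-zero variables evaluated at $c+j\varepsilon$, and heavy-tailedness of $H$ does \emph{not} spoil this since we only need the increments to have finite mean, which $\eta$ and $\zeta(x_0)$ do) — but making the bound genuinely uniform requires care when $n-k$ is comparable to $u$, precisely the regime where the denominator $\int_u^{u+na}\overline H$ is itself growing; I expect to handle this by splitting $k$ into the ranges $n-k\le T$ and $n-k>T$ for a large constant $T=T(c)$, treating the short-memory part by direct summation and the long part by the random-walk domination, exactly mirroring the structure of the proof of \eqref{thm.upper.2}. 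Letting $x\to\infty$ first (so $\overline H$-long-tailedness converts the shifted tails into the clean integral) and then $c\to\infty$ finishes the argument.
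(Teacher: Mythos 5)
Your plan goes by way of the complement: bound $\P\{X_n>u,\ \cap_k\overline{\Omega_k}\}$ and show it is $o(\P\{D_n>x\})$. That is logically equivalent to what must be proved, but it is the harder of the two routes, and the paper avoids it entirely. The paper's argument is: the events $\Omega_k$ are pairwise disjoint (because of the constraint $1/c<X_{k-1}\le c$, $X_k>\log x+c+(n-k)(a+\varepsilon)$) and each is contained in $\{X_n>\log x\}$ (by the corridor inequality $|X_{k+j}-X_k+aj|\le c+j\varepsilon$ applied at $j=n-k$), so
$\P\{\cup_k\Omega_k\mid D_n>x\}=\sum_k\P\{\Omega_k\}/\P\{D_n>x\}$. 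The numerator is then lower-bounded by exactly the single-big-jump computation already carried out for \eqref{thm.lower.2} (the corridor constraint holds with probability $\to1$ as $c\to\infty$ by the strong law of large numbers applied to the minorant $\eta$ and majorant $\zeta(x_0)$), yielding $(1-\delta(c)+o_x(1))\,a^{-1}\int_{\log x}^{\log x+na}\overline H$, while the denominator is asymptotically equal to $a^{-1}\int_{\log x}^{\log x+na}\overline H$ by \eqref{thm.upper.2}, uniformly in $n$. The ratio tends to $1$. There is nothing more to do; all the ``uniformity in $n$'' you worry about is already packaged inside \eqref{thm.upper.2}.

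Your route, by contrast, forces you to classify the bad paths, and this is where there is a genuine gap. You write that the ``no big jump'' contribution is ``$o\big(\overline H_I(u)\big)$ and hence $o\big(\int_u^{u+na}\overline H\big)$ uniformly in $n$.'' The implication goes the wrong way: $\int_u^{u+na}\overline H(y)\,dy\le\overline H_I(u)$, so being $o(\overline H_I(u))$ does \emph{not} imply being $o$ of the finite-horizon integral, and for small $n$ the finite-horizon integral is much smaller than $\overline H_I(u)$. Getting the no-big-jump term to be small \emph{uniformly over all $n\ge1$} is precisely the delicate point, and it is not a corollary of the stationary-case ``standard fact''; it would require re-deriving a refined, finite-horizon separation of the big-jump and no-big-jump contributions inside the random-walk comparison, i.e.\ essentially re-proving a sharpened version of \eqref{thm.upper.2}. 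You flag the obstacle yourself (``the main obstacle is the uniformity in $n$'') but do not close it. The disjoint-union observation is the one idea your proposal is missing; once you use it, the whole difficulty evaporates because the lower bound on $\sum_k\P\{\Omega_k\}$ and the upper bound on $\P\{D_n>x\}$ have already been established with the required uniformity in Theorem~\ref{thm:subexp}.
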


\begin{proof}
The events $\Omega(k)$, $k\le n$, are pairwise disjoint and any of them
implies $\{X_n>\log x\}$. Then similar arguments as in the proof of 
lower bound in Theorem \ref{thm:subexp} apply.
\end{proof}

\section{Impact of atom at zero}
\label{sec:subexp.1.5}

In this section we demonstrate what happens if the distribution of $A$ 
has an atom at zero. It turns out that then the tail asymptotics 
of $D_n$ are essentially different -- they are proportional to the tail of 
$H$ which is lighter than given by integrated tail distribution $H_I$ 
in the case where $A>0$ -- because the chain satisfies 
Doeblin's condition, see e.g. \cite[Ch. 16]{Tweedie}. 
As above, we denote by $H$ the distribution of the random variable $\log(1+A+B)$.
For simplicity, we assume that $B>0$.

\begin{Theorem}\label{thm:subexp.5}
Suppose that $A\ge 0$, $B>0$ and $p_0:=\P\{A=0\}\in(0,1)$.
If the distribution $H$ is long-tailed and $D_0>0$, then
\begin{eqnarray}\label{thm.lower.2.5}
\P\{D_n>x\} &\ge& \Bigl(\frac{1-(1-p_0)^n}{p_0}+o(1)\Bigr)\overline H(\log x)
\end{eqnarray}
as $x\to\infty$ uniformly for all $n\ge 1$. In particular, 
\begin{eqnarray}\label{thm.lower.1.5}
\P\{D_\infty>x\} &\ge& (p_0^{-1}+o(1))\overline H(\log x)\quad\mbox{as }x\to\infty.
\end{eqnarray}

If the distribution $H$ is subexponential, $D_0>0$ and
$\{D_0>x\}=o(\overline H(x))$ then
\begin{eqnarray}\label{thm.upper.2.5}
\P\{D_n>x\} &\sim& \frac{1-(1-p_0)^n}{p_0}\overline H(\log x)
\end{eqnarray}
as $x\to\infty$ uniformly for all $n\ge 1$. In particular,
\begin{eqnarray}\label{thm.upper.1.5}
\P\{D_\infty>x\} &\sim& p_0^{-1}\overline H(\log x)
\quad\mbox{as }x\to\infty.
\end{eqnarray}
\end{Theorem}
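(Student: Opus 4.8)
The key structural observation is that when $\P\{A=0\}=p_0>0$, the recursion $D_n=A_nD_{n-1}+B_n$ forgets its past entirely whenever $A_n=0$: on that event $D_n=B_n$, which has a fixed distribution. Let me call a time $k$ with $A_k=0$ a \emph{regeneration time}. The plan is to decompose the event $\{D_n>x\}$ according to the last regeneration time before $n$ (or the absence of one). Writing $q:=1-p_0$, with probability $q^n$ there is no regeneration in $\{1,\dots,n\}$, and on the complement the last regeneration occurs at some time $k\in\{1,\dots,n\}$ with probability $p_0 q^{n-k}$; after that time the chain runs for $n-k$ steps started from $D_k=B_k$, all with $A$-factors that are nonzero. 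This is exactly the source of the geometric weight $\sum_{k=1}^n p_0 q^{n-k}=1-q^n$, which is the factor $(1-(1-p_0)^n)/p_0$ once divided by $p_0$.

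For the lower bound \eqref{thm.lower.2.5}, I would produce, for each $k\le n$, a disjoint family of events forcing $D_n>x$ via a single big jump of $B_k$ right after a regeneration: roughly, $\{A_k=0,\ B_k\text{ large},\ A_{k+1},\dots,A_n\neq 0\text{ and their product-times-}B_k\text{ stays large}\}$. Since $\E\log A=-a<0$ and by the strong law of large numbers, $\Pi_{k+1}^n$ does not decay too fast with uniformly high probability over all horizons, so one can arrange that $B_k>e^{\log x + c}$ together with good behaviour of the subsequent factors implies $D_n>x$; the constant $c$ is absorbed using long-tailedness of $H$ (hence of $G^+$, which has the same tail up to the dominated contribution of $A$). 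Summing the $p_0 q^{n-k}\overline H(\log x+O(1))$ contributions and using uniformity of the SLLN estimate over $k$ and $n$ gives \eqref{thm.lower.2.5}; letting $n\to\infty$ gives \eqref{thm.lower.1.5}. This part is essentially the single-big-jump argument of Theorem \ref{thm:subexp} but truncated at the regeneration, so no integrated tail appears — only $\overline H$ itself.

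For the matching upper bound \eqref{thm.upper.2.5}, condition on the last regeneration time. Given it equals $k$, the chain from time $k$ onward is $D_{k+j}=\Pi_{k+1}^{k+j}B_k+\sum \Pi_{\cdot}^{k+j}B_i$ with all $A$'s nonzero, so $\log(1+D_n)$ is dominated (up to an additive constant from the $e^{-x}$ correction terms, controlled as in \eqref{xi.x.maj}) by a random walk with negative drift started from $\log(1+B_k)$; by the single-big-jump principle for subexponential $H$ the dominant contribution to $\{D_n>x\}$ comes from $B_k$ itself being of order $x$, giving $(1+o(1))\overline H(\log x)$ for each such $k$ uniformly in the horizon. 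The ``no regeneration'' term carries weight $q^n\le q^n$ and the chain there is bounded above by $\Pi_1^n D_0+\sum\Pi_{k+1}^n B_k$ with all $A$'s nonzero; using $\{D_0>x\}=o(\overline H(x))$ and that the $B_i$-sum over $n$ steps has tail $O(n\overline G^+(\log x))$, this contributes $o(\overline H(\log x))$ after being multiplied by $q^n$ — more carefully, one shows $q^n\cdot(\text{tail of the }n\text{-term sum})$ is uniformly $o(\overline H(\log x))$ because $q^n$ kills the linear-in-$n$ growth. Summing over $k$ the weights $p_0 q^{n-k}$ and invoking long-tailedness to move the $O(1)$ shifts yields \eqref{thm.upper.2.5}; $n\to\infty$ gives \eqref{thm.upper.1.5}. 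I expect the main obstacle to be the upper-bound bookkeeping: making the single-big-jump estimate for the post-regeneration random walk \emph{uniform} over the regeneration time $k$ and the horizon $n$ simultaneously, and handling the $e^{-x}$-type discrepancy between $\log(1+A(e^x-1)+B)-x$ and $\log A$ so that it does not accumulate over the (unboundedly many) post-regeneration steps — this is where the Doeblin/geometric-ergodicity structure, rather than a crude union bound, has to be used.
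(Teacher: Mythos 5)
You correctly identify the regeneration structure -- each time $A_k=0$ the chain forgets its past, so a decomposition over the last regeneration time is natural -- and you compute the geometric weights $\sum_{k=1}^n p_0 q^{n-k}=1-q^n$ correctly. This mirrors the paper's decomposition \eqref{Xn.last.zero}. However, there is a genuine gap in what you attribute to each regeneration window. You only account for a single big jump from $B_k$ at the regeneration time itself, both in your lower bound (events $\{A_k=0,\ B_k\mbox{ large},\ldots\}$) and in your upper bound (``the dominant contribution comes from $B_k$ itself''). In fact, conditioned on the last regeneration being $m$ steps before the horizon, the big jump can occur at \emph{any} of the $m+1$ steps in the post-regeneration window, and the correct per-window contribution is of order $\overline{G_0}(\log x)+m\overline{H_0}(\log x)$, where $G_0$ is the distribution of $\log(1+B)$ given $A=0$ and $H_0$ that of $\log(1+A+B)$ given $A>0$, with $\overline H=p_0\overline{G_0}+(1-p_0)\overline{H_0}$. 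It is precisely the linear-in-$m$ term, geometrically discounted by $p_0q^m$, that produces the $1/p_0$: the expected window length is $\sim 1/p_0$. Your single-jump-at-$B_k$ version sums to $(1-q^n)\overline{G_0}(\log x)$, which is off by roughly a factor of $p_0$ (and can be much worse if $B$ has a lighter tail than $A$); indeed you write ``once divided by $p_0$'' without supplying where that division comes from. The paper makes the correct count precise via an inductive lower bound $\P\{X_{k+1}>x\mid A_1=0,A_2,\ldots,A_{k+1}>0\}\ge(1-\gamma)\bigl(\overline{G_0}(x+c)+k\overline{H_0}(x+c)\bigr)$ and, on the upper side, by dominating the post-regeneration jumps by i.i.d.\ subexponential variables and applying Kesten's bound for convolution tails (Theorem 3.39 in \cite{FKZ}), which gives the uniformity in $k$ and $n$ simultaneously that you suspected was needed but did not name. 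Once the per-window contribution is corrected to allow the big jump anywhere in the window, your outline does converge to the paper's argument.
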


\begin{proof}
Let $H_0$ be the distribution of $\log(1+A+B)$ conditioned on $A>0$
and $G_0$ be the distribution of $\log(1+B)$ conditioned on $A=0$,
then $H=p_0G_0+(1-p_0)H_0$.

Let us decompose the event $X_n>x$ according to the last zero value of 
$A_k$, which gives equality
\begin{eqnarray}\label{Xn.last.zero}
\P\{X_n>x\} &=& \P\{A_1,\ldots,A_n>0, X_n>x\}\nonumber\\
&&\hspace{10mm}
+\sum_{k=1}^n \P\{A_k=0,A_{k+1}>0,\ldots,A_n>0, X_n>x\}\nonumber\\
&=& (1-p_0)^n\P\{X_n>x\mid A_1,\ldots,A_n>0\}\nonumber\\
&& +p_0\sum_{k=1}^n (1-p_0)^{n-k}\P\{X_n>x\mid 
A_k=0,A_{k+1},\ldots,A_n>0\}\nonumber\\
&=& (1-p_0)^n\P\{X_n>x\mid A_1,\ldots,A_n>0\}\nonumber\\
&& +p_0\sum_{k=0}^{n-1} (1-p_0)^k\P\{X_{k+1}>x\mid 
A_1=0,A_2,\ldots,A_{k+1}>0\},\nonumber\\[-3mm]
\end{eqnarray}
by the Markov property. In particular, the sum from $0$ to $n-1$
on the right hand side is increasing as $n$ grows as all terms are positive. 
For that reason, for the lower bounds for $\P\{D_n>x\}$ 
it suffices to prove by induction that, for any fixed $k\ge 0$
and $\gamma>0$, there exists a $c<\infty$ such that 
\begin{eqnarray}\label{XAAA}
\lefteqn{\P\{X_{k+1}>x\mid A_1=0,A_2,\ldots,A_{k+1}>0\}}\nonumber\\
&&\hspace{30mm}\ge\ 
(1-\gamma)\bigl(\overline{G_0}(x+c)+k\overline{H_0}(x+c)\bigr),
\end{eqnarray}
\begin{eqnarray}\label{XAAA.0}
\P\{X_{k+1}>x\mid A_1,\ldots,A_{k+1}>0\} &\ge& 
(1-\gamma)(k+1)\overline{H_0}(x+c)
\end{eqnarray}
for all sufficiently large $x$, because then
\begin{eqnarray*}
\P\{X_n>x\} &\ge& (1-\gamma)\biggl((1-p_0)^n n \overline{H_0}(x+c)\\
&&\hspace{20mm} +p_0\sum_{k=0}^{n-1} (1-p_0)^k
\bigl(\overline{G_0}(x+c)+k\overline{H_0}(x+c)\bigr)\biggr)\\
&=& (1-\gamma)(\bigl(1-(1-p_0)^n\bigr)
\Bigl(\overline{G_0}(x+c)+\frac{1-p_0}{p_0}\overline{H_0}(x+c)\Bigr)\\
&=& (1-\gamma)\frac{1-(1-p_0)^n}{p_0} \overline H(x+c),
\end{eqnarray*}
with further application of long-tailedness of $H$.

To prove \eqref{XAAA}, first let us note that the induction basis $k=0$ 
is immediate, since the distribution of $X_1$ conditioned on $A_1=0$ is $G_0$. 
Now let us assume that \eqref{XAAA} is true for some $k$. Denote 
$$
G_k(dy)\ :=\ \P\{X_{k+1}\in dy\mid A_1=0,A_2,\ldots,A_{k+1}>0\},\quad k\ge 0,
$$
which is a distribution on $(0,\infty)$. Then
\begin{eqnarray*}
\overline{G_{k+1}}(x) &=& \int_0^\infty \P\{\log(1+A(e^y-1)+B)>x\mid A>0\} G_k(dy)\\
&\ge& \int_\varepsilon^{1/\varepsilon} \P\{\log(1+A\delta+B)>x\mid A>0\} G_k(dy)\\
&&\hspace{10mm}+\int_{x+1/\varepsilon}^\infty \P\{\log(A(e^y-1))>x\mid A>0\} G_k(dy)\\
&=:& I_1+I_2,
\end{eqnarray*}
for any $\varepsilon\in(0,1/2]$ where $\delta=e^\varepsilon-1<\sqrt e-1<1$. 
Let us observe that then
\begin{eqnarray*}
\P\{\log(1+A\delta+B)>x\mid A>0\} &=& 
\P\{\log(1/\delta+A+B/\delta)>x-\log\delta\mid A>0\}\\
&\ge& \overline H_0(x-\log\delta).
\end{eqnarray*}
Therefore,
\begin{eqnarray*}
I_1 &\ge& \overline H_0(x-\log\delta) G_k(\varepsilon,1/\varepsilon].
\end{eqnarray*}
The second integral may be bounded below as follows:
\begin{eqnarray*}
I_2 &\ge& \P\{\log(A(e^{x+1/\varepsilon}-1))>x\mid A>0\} 
\overline G_k(x+1/\varepsilon)\\
&\ge& \P\{\log(A e^{x+1/2\varepsilon})>x\mid A>0\} \overline G_k(x+1/\varepsilon)\\
&=& \P\{A >e^{-1/2\varepsilon}\mid A>0\} \overline G_k(x+1/\varepsilon),
\end{eqnarray*}
for all sufficiently large $x$. Letting $\varepsilon\to 0$ we obtain that, 
for any fixed $\gamma>0$, there exists a $c<\infty$ 
such that the following lower bound holds
\begin{eqnarray*}
\overline{G_{k+1}}(x) &\ge& 
(1-\gamma)\bigl(\overline{H_0}(x+c)+\overline{G_k}(x+c)\bigr) 
\end{eqnarray*}
for all sufficiently large $x$, which implies the induction step.

The second lower bound, \eqref{XAAA.0}, follows by similar arguments
provided $D_0>0$.

Let us now proceed with a matching upper bound under the assumption
that $H$ is a subexponential distribution.
Since $A$, $B\ge 0$, 
\begin{eqnarray}\label{bound.xi.x}
\xi(x) &=& \log(A+e^{-x}(1-A+B))\\ 
&\le& \log(1+A+B)
\quad\mbox{for all }x>0.
\end{eqnarray}
Let $\eta$ and $\zeta$ be random variables with the following tail distributions
\begin{eqnarray*}
\P\{\eta>x\} &=& \min\biggl(1,\frac{\P\{\log(1+A+B)>x\}}{\P\{A=0\}}\biggr),\\
\P\{\zeta>x\} &=& \min\biggl(1,\frac{\P\{\log(1+A+B)>x\}}{\P\{A>0\}}\biggr),
\quad x>0.
\end{eqnarray*}
Both are subexponential random variables provided $\log(1+A+B)$ is so,
see e.g. \cite[Corollary 3.13]{FKZ}.
It follows from \eqref{bound.xi.x} that, for all $x>0$,
\begin{eqnarray*}
\P\{\xi(x)>y\mid A=0\} &\le& \P\{\eta>y\},\\ 
\P\{\xi(x)>y\mid A>0\} &\le& \P\{\zeta>y\},
\end{eqnarray*}
which implies that
\begin{eqnarray*}
\P\{X_{k+1}>x\mid A_1=0,A_2,\ldots,A_{k+1}>0\} &\le& 
\P\{\eta+\zeta_1+\ldots+\zeta_k>x\},
\end{eqnarray*}
where $\zeta_i$'s are independent copies of $\zeta$ independent of $\eta$.
Then standard technique based on Kesten's bound for convolutions
of subexponential distributions, see e.g. Theorem 3.39 in \cite{FKZ}, 
allows us to deduce from \eqref{Xn.last.zero} that, for any fixed $\gamma>0$,
\begin{eqnarray*}
\lefteqn{\P\{X_n>x\}}\\
&\le& (1+\gamma)\Bigl((1-p_0)^n n \overline G_0(x)
+p_0\sum_{k=0}^{n-1} (1-p_0)^k(\overline G_0(x)+k\overline H_0(x)\Bigr)
\end{eqnarray*}
for all $n\ge 1$ and sufficiently large $x$. Therefore,
\begin{eqnarray*}
\P\{X_n>x\} &\le& (1+\gamma)\frac{1-(1-p_0)^n}{p_0} \overline H(x),
\end{eqnarray*}
which together with the lower bound proves \eqref{thm.upper.2.5}.
\end{proof}

\section{The case of positive $A$ and signed $B$}
\label{sec:subexp.2}

In this section we consider the case where $D_n$ takes both positive
and negative values because of singed $B$, while $A$ is still assumed 
positive in this section, $A>0$. 
The Markov chain $X_n$ is defined as in \eqref{def.X.signed}.

As $B$ is no longer assumed positive, it makes the tail behaviour of $D$ quite 
different if no further assumptions are made on dependency between $A$ and $B$.
For example, in the extreme case where $B=-cA$ for some $c>0$,
so $D_{n+1}=A_n(D_n-c)$, we have that $D_n$ is eventually negative,
$D_\infty<0$ with probability 1.

More generally, if $B=A\eta$ where $\eta$ is independent of $A$ and takes 
values of both signs, then we conclude similar to \eqref{thm.lower.1} that,
as $x\to\infty$,
\begin{eqnarray*}
\P\{D_\infty>x\} &\ge& 
\biggl(\frac{1}{a}\int_\R \P\{\eta>-c\}\P\{D_\infty\in dc\}+o(1)\biggr)
\overline{F_I}(\log x),
\end{eqnarray*}
provided the distribution $F_I$ is long-tailed.
However, the technique used in Section \ref{sec:subexp.1} for proving the matching 
upper bound does not work in such cases as the Lindley majorant returns
the coefficient $a^{-1}$ which is greater than that in the lower bound above.
For that reason we restrict further considerations
to the case where $A$ and $B$ are independent.

\begin{Theorem}\label{thm:subexp.gen.B}
Suppose that $A>0$, $A$ and $B$ are independent, 
$\E\xi=-a\in(-\infty,0)$ and $\E\log(1+|B|)<\infty$.

If the integrated tail distributions $F_I$ and $G^+_I$ are long-tailed, then
\begin{eqnarray}\label{thm:subexp.gen.B.lower.1}
\P\{D_\infty>x\} &\ge& (a^{-1}{+}o(1))\Bigl(
\P\{D_\infty>0\}\overline{F_I}(\log x)
+\overline{G^+_I}(\log x)\Bigr)\mbox{ as }x\to\infty.\nonumber\\[-2mm]
\end{eqnarray}
If, in addition, the distributions $F$and $G^+$ are long-tailed itself, 
then, as $x$, $n\to\infty$,
\begin{eqnarray}\label{thm:subexp.gen.B.lower.2}
\lefteqn{\P\{D_n>x\}}\nonumber\\ 
&\ge& \frac{1{+}o(1)}{a}\biggl(
\P\{D_\infty>0\}\int_{\log x}^{\log x+na}\overline F(y)dy
+\int_{\log x}^{\log x+na}\overline{G^+}(y)dy\biggr).
\end{eqnarray}

If $\P\{D_\infty=0\}=0$, the integrated tail distributions 
$F_I$, $G^+_I$ and $G^-_I$ are long-tailed,
$\overline{G^-_I}(z)=O(\overline{F_I}(z)+\overline{G^+_I}(z))$ 
and $H_I$ is subexponential then
\begin{eqnarray}\label{thm:subexp.gen.B.upper.1}
\P\{D_\infty>x\} &\sim& a^{-1}\Bigl(
\P\{D_\infty>0\}\overline{F_I}(\log x)
+\overline{G^+_I}(\log x)\Bigr)
\mbox{ as }x\to\infty.
\end{eqnarray}
If moreover the distributions $F$, $G^+$ and $G^-$ are long-tailed,
$\overline{G^-}(z)=O(\overline F(z)+\overline{G^+}(z))$
and $H$ is strong subexponential then, as $x$, $n\to\infty$,
\begin{eqnarray}\label{thm:subexp.gen.B.upper.2}
\P\{D_n>x\} &\sim& \frac{1}{a}\biggl(
\P\{D_\infty>0\}\int_{\log x}^{\log x+na}\overline F(y)dy
+\int_{\log x}^{\log x+na}\overline{G^+}(y)dy\biggr).\nonumber\\[-1mm]
\end{eqnarray}
\end{Theorem}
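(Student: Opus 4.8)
The plan is to run the same two-sided argument as in the proof of Theorem \ref{thm:subexp}, but now with the key complication that a big jump of $X_n$ at a large state $x$ can arise in \emph{two} qualitatively different ways: either the multiplicative factor $A$ is huge (this is the $\overline{F_I}$ contribution, and it can only be exploited when $D_{k-1}$ is already positive and bounded away from zero, which is why the factor $\P\{D_\infty>0\}$ appears), or the additive input $B$ is hugely positive (this is the $\overline{G^+_I}$ contribution, and it works regardless of the sign of $D_{k-1}$ because near state $0$ the jump is essentially $\log(1+B)$). Correspondingly, I would decompose the single-big-jump event at step $k$ into two families — one on the event $\{X_{k-1}\in(1/c,c]\}$ where $A_k$ does the work, and one on $\{X_{k-1}\in[-c,c]\}$ (using that $A_k(1-e^{-X_{k-1}})$ is negligible near $0$) where $B_k^+$ does the work. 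Because $A$ and $B$ are independent, these two contributions add up at the tail level, and running the sum over $k$ and then letting $n\to\infty$, together with long-tailedness of $F_I$ and $G^+_I$, produces the lower bound \eqref{thm:subexp.gen.B.lower.1}; replacing the integrated tail by the pointwise integral $\int_{\log x}^{\log x+na}$ under long-tailedness of $F$ and $G^+$ themselves gives \eqref{thm:subexp.gen.B.lower.2}, uniformly in $n$, exactly as in Section \ref{sec:subexp.1}. Here I would use that $X_{k-1}$ converges weakly to $X_\infty$, so $\P\{X_{k-1}\in(1/c,c]\}\to\P\{0<X_\infty\le\text{something}\}$, and as $c\to\infty$ this tends to $\P\{D_\infty>0\}$.

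For the upper bound I would again dominate $X_n$ by a delayed random walk, but now the jump of that walk must majorise $\xi(x)$ uniformly over \emph{all} $x\in\R$, not just $x\ge x_0$, because $X_n$ now visits negative states. The clean way is: for $x\ge x_0$ use $\xi(x)=\log(A+e^{-x}(1-A+B))\le$ (a perturbation of $\log A$, plus $\log(1+B^+)$-type term) as in \eqref{xi.x.maj}; for $|x|\le x_0$ use that $x+\xi(x)=\pm\log(1+|A(e^{x}{-}1)+B|)$ or $\pm\log(1+|A(1{-}e^{-x})+B|)$ is bounded above by a constant plus $\log(1+B^+)$; and for $x\le -x_0$ the jump brings $X_n$ back up by at most $\log(1+B^+)$ over a bounded level, again controlled by $G^+$. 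Packaging this, $X_n$ is dominated by a delayed random walk whose increment distribution $J$ satisfies $\overline J(y)\le (1+o(1))(\overline F(y)+\overline{G^+}(y)) + \overline{G^-}(\cdot)$-terms that are $O(\overline F+\overline{G^+})$ by hypothesis, and whose negative drift is $\le -(a-\varepsilon)$. The condition $\overline{G^-_I}=O(\overline{F_I}+\overline{G^+_I})$ (resp. its non-integrated version) is precisely what guarantees that a big \emph{negative} $B$ does not create an independent heavy contribution to the \emph{upper} tail and does not spoil the strong-subexponentiality class. Applying \cite[Theorem 5.2]{FKZ} (for $D_\infty$) and \cite[Theorem 5.3]{FKZ} (uniformly in $n$) to this majorant random walk gives the upper bounds \eqref{thm:subexp.gen.B.upper.1} and \eqref{thm:subexp.gen.B.upper.2} with constant $1/(a-\varepsilon)$ in front of $\overline{F_I}+\overline{G^+_I}$, and $\varepsilon\to0$ closes the gap against the lower bound.

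The subtle point — and the reason $\P\{D_\infty=0\}=0$ is assumed — is matching the \emph{constant} in front of $\overline{F_I}$, which must be $\P\{D_\infty>0\}$ and nothing larger. The crude Lindley majorant, as the authors note in the remark before the theorem, returns $a^{-1}$ times the full $\overline{F_I}$ regardless of the sign of $D$, which would overshoot. The fix is to observe that the $A$-driven big jump can only push $X_n$ to $+\infty$ if the chain is currently at a \emph{positive} state of order $1$ (from a negative state $x$, the contribution of $A$ is through $A(1-e^{-x})<0$ or through $\log|A(1-e^{-x})|$, which does not feed the positive tail via $F_I$ at all once one tracks signs carefully), and the asymptotic fraction of time spent at such positive $O(1)$ states converges, as the truncation level $c\to\infty$, to $\P\{D_\infty>0\}$ — here the hypothesis $\P\{D_\infty=0\}=0$ removes any boundary ambiguity. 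Implementing this sign-bookkeeping inside the random-walk domination — i.e. constructing the majorising walk so that its heavy increment distribution is $\overline{F}$ only off a set of asymptotic density $\P\{D_\infty>0\}$ and is merely $\overline{G^+}$-heavy elsewhere — is the main technical obstacle; everything else is the routine machinery of Sections \ref{sec:subexp.1}–\ref{sec:subexp.1.5} together with the cited subexponential-random-walk theorems from \cite{FKZ}.
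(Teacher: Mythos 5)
Your lower bound argument is essentially the paper's: decompose the single-big-jump contribution at step $k$ into an $A_k$-driven part active only on $\{X_{k-1}\in(1/c,c]\}$ and a $B_k^+$-driven part active on $\{X_{k-1}\in[-c,c]\}$, use independence of $A$ and $B$ to add the two tail contributions, sum over $k$, and let $c\to\infty$ to extract the factor $\P\{D_\infty>0\}$ in front of $\overline{F_I}$. That part is fine and matches the paper.

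The upper bound, however, has a genuine gap, and you have flagged it yourself without closing it. A delayed random walk has i.i.d.\ increments; there is no way to build a random-walk majorant ``whose heavy increment distribution is $\overline F$ only off a set of asymptotic density $\P\{D_\infty>0\}$'' --- that object is not a random walk, and the cited theorems from \cite{FKZ} (Theorems~5.2 and 5.3) would no longer apply to it. The crude state-independent majorant necessarily returns the coefficient $a^{-1}$ in front of the full $\overline{F_I}$, which overshoots exactly when $\P\{D_\infty>0\}<1$. The paper closes this gap by a sandwich argument that avoids any state-dependent domination: (i) from $|D_{n+1}|\le A_n|D_n|+|B_n|$ one gets $|D_n|\le\widetilde D_n$ for the positive recursion $\widetilde D_{n+1}=A_n\widetilde D_n+|B_n|$, so Theorem~\ref{thm:subexp} gives $\P\{|D_\infty|>x\}\le(a^{-1}{+}o(1))\bigl(\overline{F_I}+\overline{G^+_I}+\overline{G^-_I}\bigr)(\log x)$; (ii) the single-big-jump lower bound applied to $-D_n$ gives $\P\{D_\infty<-x\}\ge(a^{-1}{+}o(1))\bigl(\P\{D_\infty<0\}\overline{F_I}+\overline{G^-_I}\bigr)(\log x)$; (iii) since $\P\{D_\infty=0\}=0$, the sum of the two one-sided lower bounds exactly matches the upper bound on $\P\{|D_\infty|>x\}$, forcing each one-sided lower bound to be tight; and (iv) the hypothesis $\overline{G^-_I}=O(\overline{F_I}+\overline{G^+_I})$ converts the resulting $o\bigl(\overline{H_I}\bigr)$ error into an $o\bigl(\overline{F_I}+\overline{G^+_I}\bigr)$ error. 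This is where $\P\{D_\infty=0\}=0$ is really used: not to ``remove boundary ambiguity'' in a limit of truncation levels, but to make the positive and negative lower bounds add up to the two-sided upper bound. You should replace your proposed state-dependent majorant construction with this subtraction argument; everything else in your write-up stands.
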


\begin{proof}
Fix an $\varepsilon>0$. As follows from \eqref{def.xi.signed+}, for $x\ge 0$, 
\begin{eqnarray*}
\xi(x) &\ge& 
\left\{
\begin{array}{rl}
\log(A(1-e^{-x})-e^{-x}B^-) & \mbox{ if }A(e^x-1)+B\ge 0,\\
-\log(1+A+|B|) & \mbox{ if }A(e^x-1)+B<0,
\end{array}
\right.
\end{eqnarray*}
where the second line follows due to $A>0$.
The minorant on the right hand side is stochastically increasing as $x$ grows,
therefore, there exists a sufficiently large $x_0$ and a random variable 
$\eta$ such that
\begin{eqnarray}\label{xi.x.min.B}
\xi(x) &\ge& \eta\quad\mbox{for all }x\ge x_0
\ \mbox{ and }\ \E\eta>-a-\varepsilon/2.
\end{eqnarray}

As in the last proof, we start with the lower bound 
\eqref{thm:subexp.gen.B.lower.1} following the single big jump technique.
Since $D_n$ is assumed to be convergent,
the associated Markov chain $X_n$ is stable, 
so there exist $n_0$ and $c>2$ such that
\begin{eqnarray*}
\P\{X_n\in(1/c,c]\} &\ge& (1-\varepsilon) \P\{D_\infty>0\}
\quad\mbox{for all }n\ge n_0,\\
\P\{|X_n|\le c\} &\ge& 1-\varepsilon\quad\mbox{for all }n,
\end{eqnarray*}
and also $\P\{A\le c\}\ge 1-\varepsilon$, $\P\{|B|\le c\}\ge 1-\varepsilon$.
For all $k$, $n$ and $c$, let us consider the events $\Omega(k,n,c)$ defined in 
\eqref{def.Bknc} and satisfying \eqref{B.c}.
It follows from \eqref{xi.x.min.B} that any of the events
\eqref{events.lower.bound} implies $X_n>x$ and they are pairwise disjoint.
Therefore, by the Markov property and \eqref{B.c}, 
\begin{eqnarray}\label{B.arb.I12}
\lefteqn{\P\{X_n>x\}}\nonumber\\ 
&\ge& \sum_{k=1}^n \P\{X_{k-1}\le c,\ 
X_k>x+c+(n-k)(a+\varepsilon)\}\P\{\Omega(k,n-k,c)\}\nonumber\\
&\ge& (1-\varepsilon)\sum_{k=1}^n \P\{X_{k-1}\le c,\ 
X_k>x+c+(n-k)(a+\varepsilon)\},
\end{eqnarray}
The $k$th term of the sum is not less than
\begin{eqnarray*}
\lefteqn{\biggl(\int_{-c}^0+\int_0^c\biggr)
\P\{X_{k-1}\in dy\}\P\{y+\xi(y)>z_{n-k}\}}\\
&=&  \int_{-c}^0
\P\{X_{k-1}\in dy\}\P\{\log(1+A(1-e^{-y})+B)>z_{n-k}\}\\
&& + \int_0^c \P\{X_{k-1}\in dy\}\P\{\log(1+A(e^y-1)+B)>z_{n-k}\}\\
&=:& I_1+I_2,
\end{eqnarray*}
where $z_k=x+c+k(a+\varepsilon)$.
For all $y\in[-c,0]$ and $z>0$, 
owing to the condition $A>0$ and independence of $A$ and $B$
\begin{eqnarray*}
\P\{\log(1+A(1-e^{-y})+B)>z\} &\ge& \P\{\log(1-Ae^c+B)>z\}\\
&\ge& \P\{A\le c\}\P\{\log(1-ce^c+B)>z\}\\
&\ge& \P\{A\le c\}\overline{G^+}(z+1)
\end{eqnarray*}
for all sufficiently large $z$ which yields that
\begin{eqnarray}\label{B.arb.I1}
I_1 &\ge& \P\{A\le c\}\P\{X_{k-1}\in[-c,0]\}\overline{G^+}(z_{n-k}+1)\nonumber\\
&\ge& (1-\varepsilon)\P\{X_{k-1}\in[-c,0]\}\overline{G^+}(z_{n-k}+1),
\end{eqnarray}
due to the choice of $c$. For all $y>0$, 
\begin{eqnarray*}
\lefteqn{\P\{\log(1+A(e^y-1)+B)>z\}}\\
&\ge& \P\{|B|\le c\}\P\{\log(1+A(e^y-1)-c)>z\} +\P\{\log(1+B)>z\},
\end{eqnarray*}
which yields that
\begin{eqnarray*}
I_2 &\ge& 
\P\{|B|\le c\}\int_{1/c}^c \P\{\log(1+A(e^y-1)-c)>z_{n-k}\}\P\{X_{k-1}\in dy\}\\
&&\hspace{60mm}+\overline{G^+}(z_{n-k})\P\{X_{k-1}\in(0,c]\}\\
&\ge& (1-\varepsilon)\P\{\log(1+A(e^{1/c}-1)-c)>z_{n-k}\}\P\{X_{k-1}\in(1/c,c]\}\\
&&\hspace{60mm}+\overline{G^+}(z_{n-k})\P\{X_{k-1}\in(0,c]\}.
\end{eqnarray*}
Therefore, by the choice of $c$, for all sufficiently large $x$ and $k>n_0$,
\begin{eqnarray}\label{B.arb.I2}
I_2 &\ge& (1-\varepsilon)^2 \P\{D_\infty>0\}\overline F(z_{n-k}+1)
+\overline{G^+}(z_{n-k})\P\{X_{k-1}\in(0,c]\}.\nonumber\\[-1mm]
\end{eqnarray}
Substituting \eqref{B.arb.I1} and \eqref{B.arb.I2} 
into \eqref{B.arb.I12} we deduce that
\begin{eqnarray*}
\P\{X_n>x\} &\ge& (1-\varepsilon)^2
\sum_{k=n_0+1}^n \Bigl(\P\{D_\infty>0\}
\overline F(x+c+1+(n-k)(a+\varepsilon))\\
&&\hspace{40mm} +\overline{G^+}(x+c+1+(n-k)(a+\varepsilon))\Bigl)
\end{eqnarray*}
Since the tail is a non-increasing function, the last sum is not less than
\begin{eqnarray}\label{integral.0.n.B}
\frac{1}{a+\varepsilon}
\int_0^{(n-n_0-1)(a+\varepsilon)} \Bigl(\P\{D_\infty>0\}
\overline F(x+c+1+y)+\overline{G^+}(x+c+1+y)\Bigr)dy.\nonumber\\[-2mm]
\end{eqnarray}
Letting $n\to\infty$ we obtain that the tail at point $x$ 
of the stationary distribution of the Markov chain $X$ is not less than
\begin{eqnarray}\label{lower.pre.c}
\lefteqn{\frac{(1-\varepsilon)^2}{a+\varepsilon}
\int_0^\infty \Bigl(\P\{D_\infty>0\}
\overline F(x+c+1+y)+\overline{G^+}(x+c+1+y)\Bigr)dy}\nonumber\\
&=& \frac{(1-\varepsilon)^2}{a+\varepsilon} \Bigl(\P\{D_\infty>0\}
\overline{F_I}(x+c+1)+\overline{G_I^+}(x+c+1)\Bigr)\\
&\sim& \frac{(1-\varepsilon)^2}{a+\varepsilon} 
\Bigl(\P\{D_\infty>0\}\overline{F_I}(x)+\overline{G_I^+}(x)\Bigr)
\quad\mbox{as }x\to\infty,\nonumber
\end{eqnarray}
due to the long-tailedness of the integrated tail distributions $F_I$
and $G_I^+$.
Summarising altogether we deduce that, for every fixed $\varepsilon>0$,
\begin{eqnarray*}
\liminf_{x\to\infty}
\frac{\P\{D_\infty>x\}}{\P\{D_\infty>0\}
\overline{F_I}(\log x)+\overline{G_I^+}(\log x)}
&\ge& \frac{(1-\varepsilon)^2}{a+\varepsilon},
\end{eqnarray*}
which implies the lower bound \eqref{thm:subexp.gen.B.lower.1}
due to the arbitrary choice of $\varepsilon>0$.

If the distributions $F$ and $G^+$ are long-tailed itself, then the integral in 
\eqref{integral.0.n.B} is asymptotically equivalent to the integral
\begin{eqnarray*}
\int_x^{x+n(a+\varepsilon)} \Bigl(\P\{D_\infty>0\}
\overline F(y)+\overline{G^+}(y)\Bigr)dy
\quad\mbox{as }x,\ n\to\infty,
\end{eqnarray*}
and the second lower bound \eqref{thm:subexp.gen.B.lower.2} follows too.

To prove matching upper bounds let us first observe that
\begin{eqnarray}\label{mod.D.upper}
|D_{n+1}| &\le& A_n |D_n|+|B_n|\quad\mbox{for all }n,
\end{eqnarray}
where the right hand side is increasing in $D_n$. Hence,
$|D_n|\le \widetilde D_n$, where $\widetilde D_n$ 
is a positive stochastic difference recursion,
\begin{eqnarray*}
\widetilde D_{n+1} &=& A_n \widetilde D_n+|B_n|.
\end{eqnarray*}
Since $H_I$ is subexponential, 
Theorem \ref{thm:subexp} applies to $\widetilde D_n$, so
\begin{eqnarray*}
\P\{\widetilde D_\infty>x\} &\sim& 
a^{-1}\overline{H_I}(\log x)\quad\mbox{as }x\to\infty,
\end{eqnarray*}
and hence
\begin{eqnarray*}
\P\{|D_\infty|>x\} &\le& 
(a^{-1}+o(1))\overline{H_I}(\log x)\quad\mbox{as }x\to\infty,
\end{eqnarray*}
It follows from \eqref{H.above.gen} that
\begin{eqnarray*}
\overline H(x) &\le& \P\{\log(1+A)>x-1\}+\P\{\log(1+|B|)>x-1\}.
\end{eqnarray*}
Integrating the last inequality we get an upper bound
\begin{eqnarray}\label{H.A.B.pre}
\overline{H_I}(x) &\le& 
\overline{F_I}(x-1)+\overline{G^-_I}(x-1)+\overline{G^+_I}(x-1)\\
&\sim& \overline{F_I}(x)+\overline{G^-_I}(x)+\overline{G^+_I}(x)
\quad\mbox{as }x\to\infty,\nonumber
\end{eqnarray}
because all three distributions, $F_I$, $G^-_I$ and $G^+_I$ are assumed long-tailed. 
Hence the following upper bound holds for the tail of $D_\infty$, as $x\to\infty$:
\begin{eqnarray}\label{mod.D.infty.upper}
\P\{|D_\infty|>x\} &\le& 
(a^{-1}{+}o(1))\bigl(\overline{F_I}(\log x)+\overline{G^-_I}(\log x)
+\overline{G^+_I}(\log x)\bigr).
\end{eqnarray}

The long-tailedness of $F_I$ and $G^-_I$ similarly to 
\eqref{thm:subexp.gen.B.lower.1} implies that
\begin{eqnarray*}
\P\{D_\infty<-x\} &\ge& (a^{-1}+o(1))\bigl(
\P\{D_\infty<0\}\overline{F_I}(\log x)
+\overline{G^-_I}(\log x)\bigr),
\end{eqnarray*}
and the two lower bounds together imply that, as $x\to\infty$,
\begin{eqnarray*}
\P\{|D_\infty|>x\} &\ge& (a^{-1}+o(1))\bigl(
\overline{F_I}(\log x)+\overline{G^+_I}(\log x)
+\overline{G^-_I}(\log x)\bigr),
\end{eqnarray*}
because $\P\{D_\infty=0\}=0$. Together with the upper bound
\eqref{mod.D.infty.upper} it yields that
\begin{eqnarray*}
\P\{D_\infty>x\} &=& a^{-1}\bigl(\P\{D_\infty>0\}
\overline{F_I}(\log x)+\overline{G^+_I}(\log x)\bigr)
+o(\overline{H_I}(\log x)),
\end{eqnarray*}
and the first asymptotics \eqref{thm:subexp.gen.B.upper.1} follows
by the condition 
$\overline{G^-_I}(z)=O(\overline{F_I}(z)+\overline{G^+_I}(z))$.

The second asymptotics \eqref{thm:subexp.gen.B.upper.2} follows 
along similar arguments.
\end{proof}

\section{Balance of negative and positive tails in the case of signed $A$}
\label{sec:subexp.3}

In this section we turn to the general case where $D_n$ takes both positive
and negative values, with $A$ taking values of both signs.
Denote $\xi:=\log |A|$ and the distribution of $\log(1+|A|)$ by $F$.
Recall that the distribution of $\log(1+|B|)$ is denoted by $G$
and the distribution of $\log(1+|A|+|B|)$ by $H$.

The Markov chain $X_n$ is defined as above in \eqref{def.X.signed}.

\begin{Theorem}\label{thm:subexp.gen}
Suppose that $\P\{D_\infty=0\}=0$,
\begin{eqnarray}\label{thm:positivity.p-.p+}
0\ <\ \P\{A>0\}\ <\ 1,
\end{eqnarray}
$A$ and $B$ are independent, $\E\xi=-a\in(-\infty,0)$ and $\E\log(1+|B|)<\infty$.

If the integrated tail distribution $H_I$ is long-tailed, then
\begin{eqnarray}\label{thm:subexp.gen.lower.1}
\P\{D_\infty>x\} &\ge& (1/2a+o(1))\overline{H_I}(\log x)
\quad\mbox{as }x\to\infty.
\end{eqnarray}
If, in addition, the distribution $H$ is long-tailed itself, then
\begin{eqnarray}\label{thm:subexp.gen.lower.2}
\P\{D_n>x\} &\ge& \frac{1+o(1)}{2a}
\int_{\log x}^{\log x+na}\overline H(y)dy\quad\mbox{as }n,\ x\to\infty.
\end{eqnarray}

If the integrated tail distribution $H_I$ is subexponential then
\begin{eqnarray}\label{thm:subexp.gen.upper.1}
\P\{D_\infty>x\} &\sim& \frac{1}{2a}\overline{H_I}(\log x)
\quad\mbox{as }x\to\infty.
\end{eqnarray}
If moreover the distribution $H$ is strong subexponential then
\begin{eqnarray}\label{thm:subexp.gen.upper.2}
\P\{D_n>x\} &\sim& \frac{1}{2a}\int_{\log x}^{\log x+na}\overline H(y)dy
\quad\mbox{as }n,\ x\to\infty.
\end{eqnarray}
\end{Theorem}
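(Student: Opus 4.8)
\textbf{Proof plan for Theorem \ref{thm:subexp.gen}.}
The plan is to reduce the signed-$A$ case to the already-established positive-$A$ result of Theorem \ref{thm:subexp.gen.B} by conditioning on the sign pattern of the $A_k$'s. The key structural fact is that $\Pi_1^n=A_1\cdots A_n$ changes sign according to the parity of the number of negative factors, and since $A$ and $B$ are independent and $\P\{A>0\}\in(0,1)$, the signs of consecutive $\Pi$'s behave like a two-state Markov chain that is uniformly mixing. Concretely, I would write $A_k=\sigma_k|A_k|$ with $\sigma_k\in\{\pm1\}$ independent of $|A_k|$ and of $B_k$, and track $\epsilon_k:=\sigma_1\cdots\sigma_k$. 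Then $D_n$ written out via \eqref{eq:D.D} becomes a sum in which each term $\Pi_{k+1}^nB_k$ carries the sign $\epsilon_n\epsilon_k$, so conditionally on the whole sign sequence, $D_n$ is (up to these $\pm$ signs) governed by the positive recursion driven by $(|A_k|,|B_k|)$. Because $\P\{\epsilon_n=+1\}\to1/2$ and in fact $\P\{D_\infty>0\}=1/2$ here (this needs a short argument: by symmetry of the sign flips and $\P\{D_\infty=0\}=0$, the stationary law puts equal mass on the two half-lines), the coefficient $\P\{D_\infty>0\}$ appearing in Theorem \ref{thm:subexp.gen.B} collapses to $1/2$, and — crucially — the roles of $G^+$ and $G^-$ get symmetrised, so that $\overline{F_I}$ and $\overline{G^+_I}+\overline{G^-_I}$ recombine into $\overline{H_I}$ via \eqref{H.A.B.pre}, producing the clean factor $1/2a$.

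For the lower bounds \eqref{thm:subexp.gen.lower.1}–\eqref{thm:subexp.gen.lower.2} I would run the single-big-jump construction exactly as in the proofs of Theorems \ref{thm:subexp} and \ref{thm:subexp.gen.B}. Fix $\varepsilon>0$; stability of $X_n$ gives a $c$ and $n_0$ with $\P\{X_{n}\in(1/c,c]\}\ge(1/2-\varepsilon)$ and $\P\{X_n\in[-c,-1/c)\}\ge(1/2-\varepsilon)$ for $n\ge n_0$, using $\P\{D_\infty>0\}=\P\{D_\infty<0\}=1/2$. One then conditions on the state at time $k-1$ lying in one of these two compact sets, forces a single big jump at step $k$ — and here the big jump can be produced either by $|A_k|$ being large (lifting whichever side $X_{k-1}$ sits on, after possibly a sign flip, onto the positive half-line) or by $B_k$ being large of the right sign — and then uses a law-of-large-numbers barrier event $\Omega(k,n-k,c)$ built from the integrable minorant $\eta$ for $\xi(x)$, $x\ge x_0$, to keep the trajectory above $x$ until time $n$. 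The minorant is obtained as before from \eqref{def.xi.signed+}: for $x\ge x_0$ large, $\xi(x)\ge\log|A|-\varepsilon/2$ on the event that the post-jump value stays positive, and the negative-sign branch is handled by the crude bound $-\log(1+|A|+|B|)$, which is integrable. Summing the disjoint events over $k$ and letting $n\to\infty$ turns the sum into $\frac{1}{a+\varepsilon}\overline{H_I}(\log x+O(1))$, because the contributions of large $|A|$ and large $B^\pm$ reassemble — via the two-sided version of \eqref{1AB.lower} and \eqref{H.A.B.pre}, together with $\overline{G^-}\asymp\overline{G^+}$ under the symmetrisation — into $\overline{H_I}$; the prefactor $1/2$ comes from the fact that only about half of the mass of $X_{k-1}$ sits on the side from which a jump of a given sign can succeed. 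Long-tailedness of $H$ upgrades the integral to $\int_{\log x}^{\log x+na}\overline H$ uniformly in $n$.

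For the matching upper bounds \eqref{thm:subexp.gen.upper.1}–\eqref{thm:subexp.gen.upper.2} the idea is to combine the absolute-value domination $|D_n|\le\widetilde D_n$ from \eqref{mod.D.upper} (so that $\P\{|D_\infty|>x\}\le(a^{-1}+o(1))\overline{H_I}(\log x)$ from Theorem \ref{thm:subexp}) with a two-sided lower bound: the lower-bound argument applied both to $\P\{D_\infty>x\}$ and to $\P\{D_\infty<-x\}$ gives $\P\{|D_\infty|>x\}\ge(a^{-1}+o(1))\overline{H_I}(\log x)$ (using $\P\{D_\infty=0\}=0$, so the two halves add up without overlap). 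Squeezing between these bounds forces $\P\{|D_\infty|>x\}\sim a^{-1}\overline{H_I}(\log x)$, and then the symmetry $\P\{D_\infty>x\}\sim\P\{D_\infty<-x\}$ — which holds because flipping all signs $\sigma_k$ maps the positive excursions to negative ones bijectively while preserving $|A_k|,B_k$ up to the sign of $B$, and the heavy contribution to the tail comes from a single big $|A|$-jump whose sign-symmetric effect is the same on both sides — gives $\P\{D_\infty>x\}\sim\frac12 a^{-1}\overline{H_I}(\log x)$. The finite-horizon statement \eqref{thm:subexp.gen.upper.2} follows by the same sandwich with $D_n$ in place of $D_\infty$ and the uniform-in-$n$ version of Theorem \ref{thm:subexp}. \textbf{The main obstacle} I anticipate is the symmetry claim $\P\{D_\infty>x\}\sim\P\{D_\infty<-x\}$: one must show that the asymmetry introduced by signed $B$ (the $G^+$ versus $G^-$ terms) is genuinely lower-order after the sign-averaging — i.e.\ that the dominant tail mechanism is a single large $|A|$ whose contribution is sign-balanced by the near-$1/2$ conditional sign of $\Pi$, so that any $B$-driven asymmetry is washed out in the integrated tail — and to make this rigorous one likely needs to carry the sign sequence $\epsilon_k$ through the whole single-big-jump decomposition rather than only invoking it at the end.
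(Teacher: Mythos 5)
Your central structural claim --- that ``$\P\{D_\infty>0\}=1/2$ here by symmetry of the sign flips'' --- is false in general, and this breaks the proposed reduction to Theorem~\ref{thm:subexp.gen.B}. The sign of $D_\infty=\sum_{k\ge1}\Pi_1^{k-1}B_k$ is governed by the \emph{finite} prefix of signs $\mathrm{sign}(\Pi_1^{k-1})$ for small $k$, which are nowhere near uniformly distributed (e.g.\ $\Pi_1^0=1$ always), together with the possibly one-sided law of $B$. For instance, take $A=\pm e^{-1}$ with $\P\{A>0\}=p^+\in(0,1)$ and $B\equiv1$: then $|A_1+A_1A_2+\cdots|\le e^{-1}/(1-e^{-1})<1$, so $D_\infty=1+A_1+A_1A_2+\cdots>0$ a.s., and $\P\{D_\infty>0\}=1$ even though $0<\P\{A>0\}<1$. (This example is light-tailed, but the claimed symmetry makes no reference to the tail of $\log|A|$, so it must fail in the asserted generality.) Because the constant $1/2a$ in the theorem statement is unconditional, it cannot be the result of substituting $\P\{D_\infty>0\}=1/2$ into \eqref{thm:subexp.gen.B.upper.1}; in fact \eqref{thm:subexp.gen.B.upper.1} has the hypothesis $A>0$, so it is not applicable at all once $A$ changes sign.

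The factor $1/2$ actually arises from the \emph{conditional} sign distribution of $X_\infty$ given that $|X_\infty|$ is large, not from the unconditional law of $D_\infty$. The paper's argument proves a lower bound $\P\{|X_\infty|>x\}\ge(a^{-1}+o(1))\overline{H_I}(x)$ via the single-big-jump decomposition (no halving at this stage: with signed $A$, a big $|A_k|$ pushes $X_k$ to large absolute value from whichever side $X_{k-1}$ sits on, so \emph{all} of the mass of $X_{k-1}$ contributes, in contrast to the positive-$A$ case). It then observes that once $|X_k|$ is large, $\mathrm{sign}(X_{k+1})=\mathrm{sign}(X_k)\,\mathrm{sign}(A_{k+1})$ up to an asymptotically negligible error, so the sign process after the big jump is (asymptotically) a two-state Markov chain with transition matrix $\binom{p^+\;p^-}{p^-\;p^+}$, which mixes to $(1/2,1/2)$ by the hypothesis $0<\P\{A>0\}<1$. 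Since $\overline H(x)=o(\overline{H_I}(x))$ for long-tailed $H$, the dominant contribution to $\overline{H_I}$ comes from big jumps many steps in the past, where the sign has fully mixed; this is the step that genuinely delivers $1/2$. So your description ``only about half of the mass of $X_{k-1}$ sits on the side from which a jump of a given sign can succeed'' is also wrong in mechanism: with signed $A$ both sides can succeed, and the halving is downstream (sign mixing of $\Pi_{k+1}^n$), not upstream (location of $X_{k-1}$).

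Your closing ``main obstacle'' paragraph does in fact identify the right phenomenon (``near-$1/2$ conditional sign of $\Pi$'' after a large $|A|$ jump, with $\epsilon_k$ carried through the decomposition), and the two-sided sandwich with the absolute-value domination $|D_{n+1}|\le|A_n||D_n|+|B_n|$ is exactly how the paper closes the upper bound. But as written that paragraph is framed as a caveat to an argument whose core step ($\P\{D_\infty>0\}=1/2$) is simply not true; you would need to promote the sign-mixing analysis to be the argument for the factor $1/2$ in the lower bound, prove $\P\{|X_\infty|>x\}\ge(a^{-1}+o(1))\overline{H_I}(x)$ by the unhalved single-big-jump count, and then sandwich against the domination bound, rather than invoking Theorem~\ref{thm:subexp.gen.B}.
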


\begin{proof}
The same arguments based on the single big jump technique used
in the last section for proving \eqref{lower.pre.c} show that, 
for any fixed $\varepsilon>0$, there exists a $c<\infty$ such that
\begin{eqnarray*}
\P\{|X_\infty|>x\} &\ge& \frac{1-\varepsilon}{a}
\bigl(\P\{D_\infty\not=0\}\overline{F_I}(x+c+1)+\overline{G_I}(x+c+1)\bigr)
\end{eqnarray*}
for all sufficiently large $x$. Similar to \eqref{H.A.B.pre},
\begin{eqnarray*}
\overline{H_I}(x) &\le& 
\overline{F_I}(x-1)+\overline{G_I}(x-1)
\end{eqnarray*}
for all sufficiently large $x$,
which together with the condition $\P\{D_\infty=0\}=0$ implies that
\begin{eqnarray*}
\P\{|X_\infty|>x\} &\ge& \frac{1-\varepsilon}{a}\overline{H_I}(x+c+2)\\
&\sim& \frac{1-\varepsilon}{a}\overline{H_I}(x)\quad\mbox{as }x\to\infty,
\end{eqnarray*}
due to the long-taileness of the distribution $H_I$. Therefore,
\begin{eqnarray}\label{mod.D.lower}
\P\{|X_\infty|>x\} &\ge& (a^{-1}+o(1))\overline{H_I}(x)\quad\mbox{as }x\to\infty.
\end{eqnarray}
At any time large absolute value of $X_n$ changes its sign with 
asymptotic (as $x\to\infty$) probability $p^-=\P\{A<0\}$ 
and keeps its sign with asymptotic probability $p^+=\P\{A>0\}$,
so sign change may be asymptotically described as
a Markov chain with transition probability matrix
\begin{eqnarray*}
\left(
\begin{array}{cc}
p^+ & p^-\\
p^- & p^+
\end{array}
\right),
\end{eqnarray*}
whose asymptotic distribution is $(1/2,1/2)$, owing to the condition 
\eqref{thm:positivity.p-.p+}. 
For that reason, the probability of a large positive value of $X_n$
is approximately at least one half of the right hand side of \eqref{mod.D.lower},
and the proof of \eqref{thm:subexp.gen.lower.1} is complete.
The proof of \eqref{thm:subexp.gen.lower.2} follows the same lines.

To prove the upper bound \eqref{thm:subexp.gen.upper.1},
similar to \eqref{mod.D.upper} we first note that
\begin{eqnarray*}
|D_{n+1}| &\le& |A_n| |D_n|+|B_n|\quad\mbox{for all }n,
\end{eqnarray*}
which allows to conclude the proof as it was done in the last section.
\end{proof}

\end{document}